\documentclass[a4paper,11pt]{article}

\IfFileExists{styleart.sty}{\input{styleart.sty}}{%
  \usepackage{amsmath,amssymb,amsthm}
  \newtheorem{theorem}{Theorem}[section]
  \newtheorem{proposition}[theorem]{Proposition}
  \newtheorem{lemma}[theorem]{Lemma}
  \newtheorem{fact}[theorem]{Fact}
  \theoremstyle{definition}
  \newtheorem{definition}[theorem]{Definition}
}

\title{Uniform small cancellation in torsion-free hyperbolic groups}
\date{\today}

\author{O. Kharlampovich, R. Sklinos}

\begin{document}

\maketitle

\begin{abstract}
We establish a uniform small cancellation theorem for non-elementary
torsion-free hyperbolic groups: given any prescribed small cancellation
constant, finitely many fixed words suffice to produce a small
cancellation element from every pair of noncommuting elements.

As an application, we prove that the first-order theory of every
non-elementary torsion-free hyperbolic group does not admit a model
companion.
\end{abstract}

\medskip
\noindent\textbf{MSC 2020:}
Primary 03C10; Secondary 20E05, 20F65.
\medskip
\section{Introduction}

Small cancellation phenomena play an important role in the study of
equations and first-order properties of groups. For free groups, the
geometry of the Cayley tree makes it possible to construct, uniformly
from arbitrary noncommuting elements, elements satisfying arbitrarily
strong small cancellation conditions. In our previous work on the
nonexistence of a model companion for the theory of nonabelian free
groups, such a uniform construction was a key ingredient. More
precisely, using results of Casals-Ruiz, Garreta, and de la Nuez
Gonz\'alez \cite{MR4206600}, we obtained, for every prescribed small
cancellation constant, finitely many words such that one of them
produces a small cancellation element from any pair of noncommuting
elements.

The main purpose of the present paper is to establish an analogous
uniform small cancellation theorem for torsion-free hyperbolic groups.
The passage from free groups to hyperbolic groups introduces a genuine
geometric difficulty. In a tree, axes are uniquely determined and
their overlap is measured by their actual intersection. In a
hyperbolic Cayley graph, axes and intersections have to be replaced by
coarse counterparts, and the uniform control required for the
construction is correspondingly more delicate. Nevertheless, we show
that the uniform phenomenon persists. Our main result is the following.

\begin{theorem}\label{thm:main-sc}
Let $\Gamma$ be a non-elementary torsion-free hyperbolic group with a
fixed finite generating set. For every positive integer $N$, there exists
a finite set of words
\[
W_N\subseteq \mathbb F(x,y)
\]
such that, for every pair of noncommuting elements $a,b\in\Gamma$, there
exists $w\in W_N$ for which $w(a,b)$ satisfies the $N$-small cancellation
condition.
\end{theorem}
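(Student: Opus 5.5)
The plan is to reduce to a uniform "ping-pong with controlled overlaps" statement and then to choose the words $W_N$ to be long products in two auxiliary elements with pairwise distinct exponent patterns. We interpret the $N$-small cancellation condition for an element $g$ as follows: a cyclically reduced representative (equivalently, a quasi-axis of $g$) has the property that any piece shares with a translate at most a $1/N$ fraction of the translation length $\tau(g)$. Here a piece means an overlap between the axis and a translate $h\cdot\mathrm{axis}(g)$, measured coarsely (up to a constant depending only on $\delta$), with $h\notin\langle g\rangle$.

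\textbf{Step 1: uniformly short loxodromics with bounded overlap.} The first step is the case $u,v$ loxodromic. By a uniform Tits-alternative / uniform exponential growth result for torsion-free hyperbolic groups (Koubi, Delzant), there is a constant $M=M(\Gamma)$ and a finite set $U$ of words of length $\le M$ with the following property. For every noncommuting pair $a,b$, some pair $u=u_1(a,b)$, $v=u_2(a,b)$ with $u_1,u_2\in U$ consists of loxodromics whose quasi-axes have coarse overlap bounded by $C\cdot\min(\tau(u),\tau(v))$, uniformly. More precisely, I would prove the following. The overlap $\Delta(u,v)$, meaning the diameter of the $20\delta$-neighbourhood intersection of the axes, is at most $K\delta+\tau$. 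Moreover, no translate of the axis of $u$ fellow-travels the axis of $u$ longer than $\tau(u)+K\delta$ unless it commutes with $u$; this is where torsion-freeness is used, since maximal cyclic subgroups are malnormal. The main obstacle is here: $a,b$ may have tiny translation lengths compared to the overlap of their axes. One conjugates $a,b$ to a minimal displacement position and uses the standard fact that in a $\delta$-hyperbolic group $\langle a,b\rangle$ contains a word of length bounded in terms of $\delta$ and $|S|$ that acts loxodromically with controlled overlap (the Koubi argument). Alternatively, one treats $a,b$ elliptic-like by replacing them with $ab$ and $ab^{-1}$.

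\textbf{Step 2: the small cancellation words.} Given such $u,v$ with overlap at most $D$ relative to their translation lengths, we set
\[
w=u^{n}v^{n}u^{n+1}v^{n}\cdots u^{n+k}v^{n}.
\]
Here $n$ and $k$ depend only on $N$ and the uniform constants. Since $u^{n}$ and $v^{n}$ are long compared with the overlap $D$, a local-to-global argument for quasi-geodesics shows that the concatenated broken path is a uniform quasi-geodesic. Thus $w$ is loxodromic with $\tau(w)\approx \sum(\text{segment lengths})$. A long piece, i.e.\ an overlap of $\mathrm{axis}(w)$ with $h\cdot\mathrm{axis}(w)$ longer than $\tau(w)/N$, must contain a full syllable $u^{n+i}$ fellow-travelling a syllable of the translate. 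Because of the bounded overlap and the commensurator property, this forces the syllables to align with the same exponent. Since the exponents $n,\dots,n+k$ are pairwise distinct, the alignment is forced to be a shift by a power of $w$, so $h\in\langle w\rangle$. Choosing $k>2N$ makes any piece shorter than two syllables, hence shorter than $\tau(w)/N$.

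\textbf{Step 3: finiteness.} We then take $W_N=\{w(u_1,u_2): (u_1,u_2)\in U\times U\}$ with $w$ as in Step 2, substituted into $\mathbb F(x,y)$. This set is finite because $U$ is finite and $n,k$ are uniform. The key steps are that all constants in Steps 1--2 depend only on $\delta$, $|S|$ and $N$, not on $a,b$. I expect Step 1 to be the real difficulty: making the overlap bound uniform relative to translation lengths, rather than in absolute terms, is the geometric subtlety flagged in the introduction.
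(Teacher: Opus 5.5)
\paragraph{Verdict.} There is a genuine gap, and it sits in the step you yourself flag as the crux. Your overall architecture matches the paper's: normalize an arbitrary noncommuting pair into a good geometric position using finitely many fixed words, then apply a long-power word with distinct exponents. But Step~1 is left unproved, and as formulated it is not the hypothesis Step~2 needs.

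\paragraph{Step 1.} The results of Koubi and Delzant give free subgroups or subsemigroups generated by short words. They do not control axis overlaps relative to translation lengths. Replacing $a,b$ by $ab,ab^{-1}$ does not help either. In a torsion-free hyperbolic group every nontrivial element is loxodromic, and the real issue is that $\tau(a)$ may be tiny compared with how far $b$ moves points of $A(a)$.

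Moreover, the condition $\Delta(u,v)\le C\min\{\tau(u),\tau(v)\}$ does not bound the distance between $A(u)$ and $A(v)$: far-apart axes have overlap $0$. In that case the broken path for $u^nv^nu^{n+1}\cdots$ contains long bridge segments that lie near neither syllable axis. Such a bridge can fellow-travel some other translate of $A(u)$. So neither your quasi-geodesic estimate nor your syllable-matching argument is justified.

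The missing ingredient is Proposition~\ref{prop:normalization}. Let $E$ be the joint minimal displacement of $\{a,b\}$. The Breuillard--Fujiwara inequality gives $E-K\delta\le\max\{\tau(a),\tau(b),\tfrac12\tau(ab)\}$.
\begin{itemize}
\item When $E$ is large, one finds $h\in\{a,b,ab\}$, a companion $g$, and $o\in A(h)$ with $d(o,go)\le 4\tau(h)$.
\item When $E$ is bounded, a simultaneous conjugation puts $(a,b)$ into a fixed finite ball, and a single power $r$ works for all such pairs.
\end{itemize}
With this in hand, your Step~1 could be realized by $u=h$, $v=ghg^{-1}$. These have equal translation lengths, overlap at most $K\tau(h)$ by Fact~\ref{Periodicity}, and axes within $4\tau(h)$ of each other.

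\paragraph{Step 2.} This step is also too quick. All your $v$-syllables carry the same exponent $n$, so matching one syllable of $A(w)$ with one syllable of a translate forces nothing. To pin down the translating element you need a run of consecutive matches plus malnormality. The paper's Claims~3--5 use three consecutive matched intervals, the fact that consecutive subgroups $C_i$ differ, and $g\notin C_\Gamma(h)$.

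If $u$ and $v$ are conjugate, as in the realization above, a $u$-syllable can even match a $v$-syllable. Your ``same exponent'' argument does not address this.

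For the same reason, the claim that ``$k>2N$ makes any piece shorter than two syllables'' is not established. A non-aligned overlap can contain two complete syllables plus parts of their neighbours. Your syllables also have different lengths, $(n+i)\tau(u)$ and $n\tau(v)$, so the count needs redoing.

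\paragraph{How the paper avoids these issues.} It alternates long powers of a single element $h$ with the short element $g$. Every long piece is then a translate of $A(h)$, and junctions cost only $O(\tau(h))$. It also uses exponents $m^2+i$. This yields $\tau(u)\ge\frac12 m^3\tau(h)$, against non-aligned overlaps of at most $100m^2\tau(h)$ (Proposition~\ref{HyperbolicSmallCancellation}).
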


The uniformity is the essential point of the theorem: for a prescribed
$N$, the choice of the small cancellation element can be made from a
fixed finite family of words, independently of the particular
noncommuting pair.

As an application, we return to the model-theoretic question that
motivated the uniform construction in the free group case. Combining
Theorem~\ref{thm:main-sc} with a suitable version of the implicit
function theorem for hyperbolic groups, we obtain the following.

\begin{theorem}\label{thm:model-companion}
Let $\Gamma$ be a non-elementary torsion-free hyperbolic group. Then
$\operatorname{Th}(\Gamma)$ does not admit a model companion.
\end{theorem}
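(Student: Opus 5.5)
The statement to prove is Theorem~\ref{thm:model-companion}. The plan is to follow the strategy from the free group case. We use the standard criterion that $T=\operatorname{Th}(\Gamma)$ has a model companion if and only if the class of existentially closed models of $T_\forall$ is elementary. Equivalently, for each existential formula $\varphi(\bar x)$ there should be a universal formula $\psi(\bar x)$ such that, in every existentially closed model, $\varphi$ and $\psi$ agree. Arguing by contradiction, we assume a model companion $T^*$ exists. Since $\Gamma$ is torsion-free hyperbolic, $T_\forall$ is the universal theory of $\Gamma$, and its models are the groups locally embeddable into ultrapowers of $\Gamma$.

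\textbf{Step 1: a formula that cannot be uniformly universal.} We look for an existential formula that cannot be equivalent modulo $T^*$ to a universal one. The natural candidate is a formula expressing that an element is ``long'' or ``has a root/solution of a small cancellation equation''. Concretely, we use Theorem~\ref{thm:main-sc} to turn a noncommuting pair $a,b$ into an element $c=w(a,b)$ satisfying $N$-small cancellation. We then consider an existential formula $\exists z\,(z^{k}=c)$, or a formula stating that a small cancellation equation $E(c,z)=1$ has a nontrivial solution. The key point is that $w$ ranges over the finite set $W_N$, so the condition ``some $w(a,b)$ is small cancellation'' is captured by a finite disjunction. This makes the construction first-order uniform in $a,b$.

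\textbf{Step 2: analysing existentially closed models.} In an existentially closed model $M$, adjoining a solution of $E(c,z)=1$ gives a group that is still a model of $T_\forall$. Small cancellation theory over hyperbolic groups (Olshanskii-type quotients, or HNN/amalgam extensions that remain fully residually $\Gamma$) shows that the extension embeds in a limit group over $\Gamma$. Hence such solutions exist in $M$ for every noncommuting pair. Conversely, in $\Gamma$ itself, the implicit function theorem for hyperbolic groups (Merzlyakov-type lifting over $\Gamma$) controls which such equations have solutions. Combining these, we aim to show that $\varphi$ holds in every e.c.\ model yet defines a non-$\forall$ condition. The argument is a compactness argument: take a sequence of e.c.\ models where $\psi$ holds on tuples approximating, in an ultraproduct, a tuple in a non-e.c.\ extension where $\varphi$ fails. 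Existential closedness of an ultraproduct of e.c.\ models would then follow from elementarity of $T^*$, which gives the contradiction.

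\textbf{Main obstacle.} The delicate step is producing, inside the class of e.c.\ models, tuples for which the universal candidate $\psi$ is forced while $\varphi$ fails in the limit. This requires that the small cancellation element be chosen uniformly, and that is exactly where Theorem~\ref{thm:main-sc} is used, together with the implicit function theorem guaranteeing that the relevant solutions lift over $\Gamma$. The first thing to try is to transfer the free group argument verbatim, replacing free-group small cancellation with the $N$-small cancellation of Theorem~\ref{thm:main-sc}. Then check that each embedding used is into a group universally equivalent to $\Gamma$, using torsion-freeness and hyperbolicity of the small cancellation quotients.
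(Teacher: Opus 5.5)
Your proposal has a genuine gap: it never fixes the existential formula that is supposed to fail to be equivalent to a universal one. The closing ``compactness argument'' does not say which tuples, which extension, or why $\varphi$ fails there, so no contradiction is ever derived.

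The missing idea is the paper's concrete obstruction. Your own criterion, applied to $\exists y\,(y^2=x)$, says that if a model companion $\mathcal U$ exists, the nonsquares are defined in it by an existential formula $\phi(x)=\exists\bar y\,\psi(x,\bar y)$. Hence $\theta(x):=\exists y\,(y^2=x)\vee\phi(x)$ is existential and $\mathcal U\models\forall x\,\theta(x)$. For noncommuting $a,b$ in a nonabelian model of $\mathcal U$, the \emph{single} existential sentence $\exists p\,\exists q\,\bigl([p,q]\neq1\wedge\bigwedge_{w\in\mathcal W_\theta}\theta(w(p,q))\bigr)$ holds. It transfers to $\Gamma$ because $\mathcal U^\forall=\operatorname{Th}(\Gamma)^\forall$.

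This is what the finiteness of the word family is for. It is not used to express ``some $w(a,b)$ is small cancellation'' in first-order logic; that geometric condition is never transferred between models. Only once you have actual witnesses in $\Gamma$ is small cancellation used: Theorem~\ref{thm:main-sc} and Theorem~\ref{ImplicitlLimitGroups}, packaged as Lemma~\ref{FiniteTest}, give $\Gamma*\langle z\rangle\models\theta(z)$. Since $z$ is not a square, $\phi(z)$ holds. Since $z\mapsto z^2$ extends to an embedding of $\Gamma*\langle z\rangle$ fixing $\Gamma$, $\phi(z^2)$ also holds. But $\forall x\,\forall\bar y\,\forall t\,\neg(\psi(x,\bar y)\wedge t^2=x)$ lies in $\mathcal U^\forall=\operatorname{Th}(\Gamma)^\forall$, so it holds in $\Gamma*\langle z\rangle\equiv\Gamma$, which is a contradiction.

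Your Step~2 is not merely unjustified; for the root equations you propose it is false, and it points in the wrong direction. Olshanskii-type constructions yield quotients, not extensions of $M$. You also give no argument that adjoining a root by an amalgam or HNN extension stays inside $\operatorname{Th}(\Gamma)^\forall$. In fact the paper's tools rule this out. Apply Lemma~\ref{FiniteTest} to $\exists t\,(t^k=x)$ with $k\geq2$, and use that $z$ is not a $k$-th power in $\Gamma*\langle z\rangle$. You get that for all noncommuting $a,b\in\Gamma$, some $w(a,b)$ (with $w$ in the family supplied by the lemma) has no $k$-th root. This is a universal sentence true in $\Gamma$, hence in every model of $\operatorname{Th}(\Gamma)^\forall$.

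Your root-adjunction argument does not depend on the choice of $w$. So it would make every $w(a,b)$ a $k$-th power in an existentially closed model, which is impossible. The actual proof runs on the opposite phenomenon: the generic element $z$ has no square root, yet no existential formula can distinguish $z$ from the square $z^2$.
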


The model-theoretic obstruction is the same one that appears in the
free group case. If a model companion existed, the set of nonsquares
would be existentially definable in it. Uniform small cancellation,
together with the implicit function theorem, allows one to transfer
such an existential condition to a suitable generic element. But an existential formula which holds of a generic element also holds of each of its nontrivial powers. In particular, the formula defining nonsquares would hold of a square, yielding a contradiction. The main new ingredient needed to carry out this argument is the uniform small cancellation theorem for torsion-free hyperbolic groups proved in this paper.

\medskip
\noindent{\bf Acknowledgments.}
The first-named author thanks the Dolciani--Halloran Foundation for its
support. The second-named author was supported by the National Natural
Science Foundation of China (NSFC), grant no.~12350610234.

\section{Preliminaries}

\subsection{Geometry of hyperbolic groups}
We begin by recalling some background on hyperbolic groups and their actions on hyperbolic spaces. Many of the notions and arguments below are coarse analogues of their counterparts for actions on trees, and the tree case will serve as a useful geometric model throughout.

Let $\Gamma$ be a torsion-free hyperbolic group. Fix a finite generating
set and let $X$ be the geometric realization of the Cayley graph, equipped with its path metric in which every edge has length $1$. 
We fix a hyperbolicity constant $\delta\geq 1$. 

For $v\in\Gamma$, write
\[
 \tau(v)=\lim_{n\longrightarrow\infty}\frac{d(x,v^nx)}{n}
\]
for its stable translation length. This limit is independent of $x$.
Associated with the hyperbolic space $X$ is its \emph{Gromov boundary},
denoted by $\partial X$. Informally, $\partial X$ records the asymptotic
directions in which one can escape to infinity in $X$. More precisely,
fix a basepoint $o\in X$ and define the Gromov product by
\[
 (x,y)_o=\frac12\bigl(d(o,x)+d(o,y)-d(x,y)\bigr).
\]
A sequence $(x_n)$ in $X$ is said to converge to infinity if
\[
 (x_n,x_m)_o\longrightarrow\infty
 \qquad\text{as }n,m\longrightarrow\infty.
\]
Two such sequences $(x_n)$ and $(y_n)$ are equivalent if
\[
 (x_n,y_m)_o\longrightarrow\infty
 \qquad\text{as }n,m\longrightarrow\infty.
\]
The Gromov boundary $\partial X$ is the set of equivalence classes of
such sequences. This definition is independent of the choice of the
basepoint $o$. Equivalently, since $X$ is a proper geodesic hyperbolic
space, $\partial X$ may be identified with the set of geodesic rays
starting at $o$, where two rays are identified if they remain at bounded
Hausdorff distance.

If $v\neq1$, let $v^-,v^+\in\partial X$ be its repelling and attracting
fixed points, and let $A(v)$, which we call the quasi-axis of $v$ be the union of all bi-infinite geodesics joining them. Any two bi-infinite geodesics joining $v^-$ to $v^+$ are at uniformly
bounded Hausdorff distance. Consequently, $A(v)$ is uniformly
quasiconvex: there exists $Q$, depending only on $X$, such that, for
all $x,y\in A(v)$,
\[
[x,y]\subseteq A(v)^{+Q}.
\]
We orient each geodesic in $A(v)$ from $v^-$ towards $v^+$. We write
\[
 C_\Gamma(v)=\{r\in\Gamma:[r,v]=1\}
\]
for the centralizer of $v$. Since $\Gamma$ is torsion-free and
hyperbolic, $C_\Gamma(v)$ is the unique maximal cyclic subgroup
containing $v$. It is malnormal and equal to its normalizer. It also
coincides with the stabilizer of $\{v^-,v^+\}$: this stabilizer is
virtually cyclic, hence cyclic because $\Gamma$ is torsion-free.
In particular,
\begin{equation}\label{Malnormal}
 C_\Gamma(v)\cap rC_\Gamma(v)r^{-1}\neq\{1\}
 \quad\Longrightarrow\quad r\in C_\Gamma(v).
\end{equation}

In a tree, the overlap of two axes is measured by the length of their
intersection. In the hyperbolic setting, where geodesics are only
coarsely determined, we replace this by a coarse notion of overlap. For a subset $Y\subseteq X$, denote its closed $r$-neighbourhood by
$Y^{+r}$. For nontrivial $v,w\in\Gamma$, put
\[
 \Delta(v,w)=\operatorname{diam}
 \bigl(A(v)^{+100\delta}\cap A(w)^{+100\delta}\bigr),
\]
where the diameter of the empty set is $0$. Thus
$A(fvf^{-1})=fA(v)$.

Enlarging $\delta$ once and for all, we assume that
\begin{equation}\label{FourPoint}
 (x,z)_y\geq\min\{(x,w)_y,(w,z)_y\}-\delta,
\end{equation}
and that $X$ is $\delta$-hyperbolic with respect to the standard
equivalent formulations of hyperbolicity used below.

In a tree, if $x$ belongs to the axis of $v$, then
\[
 d(x,v^nx)=n\tau(v).
\]
The corresponding statement in a hyperbolic space holds up to a
uniformly bounded additive error. More precisely, standard displacement
estimates for hyperbolic isometries
\cite[Chapitre~10, \S6]{CDP90} imply that there exists an absolute
constant $c>0$ such that, for every $x\in A(v)$ and every $n\geq1$,
\begin{equation}
 \bigl|d(x,v^nx)-n\tau(v)\bigr|\leq c\delta.
\end{equation}

We shall also use the following standard consequence of hyperbolicity.
There is a constant $c_0=c_0(\delta)$ such that, whenever
$(x,z)_y>R$, the initial portions of $[y,x]$ and $[y,z]$, up to
distance $R-c_0$ from $y$, fellow travel within distance $c_0$.

We fix a constant $\eta$, depending only on $X$, such that
\[
 \eta\geq\max\{c\delta,10\delta+Q,c_0\},
\]
where $Q$ is the quasiconvexity constant for the sets $A(v)$ fixed
above. In particular, for $x\in A(v)$ and $n\geq1$,
\begin{equation}\label{AxisDisplacementEta}
 \bigl|d(x,v^nx)-n\tau(v)\bigr|\leq\eta.
\end{equation}

We shall also need a coarse analogue of the following elementary
phenomenon for actions on trees. If $v$ acts hyperbolically on a tree,
then its axis and a distinct translate of its axis cannot have
arbitrarily large overlap relative to the translation length of $v$.
In our setting, actual intersection is replaced by coarse overlap: we
consider fixed neighbourhoods of $A(v)$ and of its translate $rA(v)$.
The following uniform periodicity estimate asserts that, unless $r$
commutes with $v$, the diameter of this coarse overlap is bounded by a
uniform multiple of $\tau(v)$.

\begin{fact}\label{Periodicity}
There is a constant $K$, depending only on $X$, such that, if
$v\neq1$, $\tau(v)\geq1$, and $[r,v]\neq1$, then
\begin{equation}\label{PeriodicOverlap}
 \operatorname{diam}
 \bigl(A(v)^{+10\eta}\cap rA(v)^{+10\eta}\bigr)
 \leq K\tau(v).
\end{equation}
We may assume that $K\geq100\eta$.
\end{fact}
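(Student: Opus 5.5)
We argue by contradiction: the coarse overlap is $v$-invariant in a coarse sense, so a long overlap forces $rvr^{-1}$ to nearly coincide with $v$ (or $v^{-1}$) on a long segment. A standard finiteness argument then produces a nontrivial element of $C_\Gamma(v)\cap rC_\Gamma(v)r^{-1}$, and malnormality \eqref{Malnormal} gives $r\in C_\Gamma(v)$.

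\emph{Step 1: reduction to displacement.} Suppose the diameter of $A(v)^{+10\eta}\cap rA(v)^{+10\eta}$ exceeds $K\tau(v)$, with $K$ to be chosen. Since $rA(v)=A(rvr^{-1})$, quasiconvexity and thin quadrilaterals give geodesic segments $\sigma\subseteq A(v)$ and $\sigma'\subseteq A(rvr^{-1})$ of length at least $K\tau(v)-O(\eta)$ that fellow travel within $O(\eta)$. Reversing orientation if needed (replace $v$ by $v^{-1}$ in the conjugate), we may assume $\sigma$ and $\sigma'$ are coherently oriented. Let $u=rvr^{-1}$, so $\tau(u)=\tau(v)$. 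Then \eqref{AxisDisplacementEta} shows that both $v$ and $u$ translate points of $\sigma$ by $\tau(v)\pm\eta$ along essentially the same direction. Hence for every $x\in\sigma$ whose distance from the ends of $\sigma$ exceeds $\tau(v)+C\eta$,
\[
d(x,\,v^{-1}u\,x)\le C'\eta,
\]
with $C'$ depending only on $X$. The same holds for the commutator-type elements $g_k=v^{-k}u^{k}$ for $0\le k\le m$ whenever $m\tau(v)$ is at most roughly half the length of $\sigma$. Here one needs that the error does not accumulate with $k$; this follows because both $v^k$ and $u^k$ displace points of their axes by $k\tau(v)\pm\eta$, so we compare them directly rather than iterating.

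\emph{Step 2: pigeonhole.} Fix $x_0\in\sigma$ near the middle. All $g_k$ move $x_0$ by at most $C'\eta$, and the number of group elements moving a given point by at most $C'\eta$ is bounded by a constant $M$ depending only on $X$, since balls of fixed radius in the Cayley graph have bounded cardinality. Choose $K$ so that $m> M$. Then $g_i=g_j$ for some $i<j$, that is, $v^{-i}u^{i}=v^{-j}u^{j}$, which gives $u^{j-i}=v^{j-i}$. Thus $rv^{j-i}r^{-1}=v^{j-i}\neq1$, since $\Gamma$ is torsion-free. This is a nontrivial element of $C_\Gamma(v)\cap rC_\Gamma(v)r^{-1}$, so \eqref{Malnormal} gives $r\in C_\Gamma(v)$, contradicting $[r,v]\neq1$. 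We may clearly also enlarge $K$ to ensure $K\ge100\eta$. The hypothesis $\tau(v)\ge1$ ensures that the segment length $K\tau(v)$ dominates the additive $O(\eta)$ errors, so the count $m\approx K/4$ is uniform.

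\emph{Main obstacle.} The delicate point is Step 1: showing that $v^{-k}u^k$ moves a point of $\sigma$ by a bound independent of $k$. Estimate \eqref{AxisDisplacementEta} only controls distances, not positions along the axis. I would first use that $v^kx$ lies within $O(\eta)$ of $A(v)$, hence of $\sigma'$ when $x$ is deep inside $\sigma$. Coherent orientation then places $v^kx$ and $u^kx$ at the same position along $\sigma'$ up to $O(\eta)$. That pins them within $O(\eta)$ of each other, with the constant uniform in $k$. The orientation case split, where the axes are anti-aligned, is handled identically with $u^{-1}$ in place of $u$.
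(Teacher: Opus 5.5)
Your argument is correct, but it follows a different route from the paper's.

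\emph{The paper's route.} The paper does not prove the estimate directly. It quotes the cylinder-overlap estimate of \cite[Lemma~2.30]{DS20}, which bounds the overlap for distinct conjugate maximal cyclic subgroups by a constant multiple of $[C]+\delta$. Here $[C]$ is the least above-threshold displacement of an element of $C=C_\Gamma(v)$. The paper then bounds $[C]\leq\tau(v)+c_1\delta$ using a power of $v$ and \eqref{AxisDisplacementEta}, enlarges $\delta$ to accommodate the $10\eta$-neighbourhoods, and absorbs the additive terms using $\tau(v)\geq1$.

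\emph{Your route.} You prove the needed rigidity yourself. The elements $v^{-k}u^k$, with $u=rv^{\pm1}r^{-1}$, all move a midpoint of the fellow-travelling corridor by a $k$-independent amount. Local finiteness of $X$ bounds how many such elements exist. Pigeonhole then gives $u^n=v^n$ for some $n\neq0$, and \eqref{Malnormal} finishes the proof. This is essentially the mechanism behind such cylinder or acylindricity estimates.

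\emph{What each buys.} Your version is self-contained and uses only properness of the action and malnormality. It also makes $K$ explicit: roughly four times the number of group elements in a ball of radius $O(\eta)$. The paper's citation is shorter. Its intermediate bound in terms of $[C]$ is in principle sharper when $v$ is a proper power, though the Fact does not need this.

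\emph{What you should write out.} The one step to make explicit is the $k$-uniform comparison you flag yourself. You need that $v^k$ moves points of a fixed geodesic in $A(v)$ toward $v^+$ by $k\tau(v)$, up to an error independent of $k$. This follows from \eqref{AxisDisplacementEta} applied at each $v^jx$, all of which lie in $A(v)$. One combines this with a sign-consistency argument for their projections onto that geodesic, valid once $k\tau(v)$ exceeds a fixed multiple of $\eta$. For smaller $k\tau(v)$, both $v^kx_0$ and $u^kx_0$ lie within $O(\eta)$ of $x_0$, so direction is irrelevant.
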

\begin{proof}
Apply the cylinder-overlap estimate of
\cite[Lemma~2.30]{DS20}, increasing the hyperbolicity constant by a
fixed factor to accommodate the neighbourhoods in
\eqref{PeriodicOverlap}. For conjugate maximal cyclic subgroups, that
estimate bounds the overlap by a constant multiple of $[C]+\delta$,
where $[C]$ is the least displacement of an element of
$C=C_\Gamma(v)$ whose displacement exceeds the fixed threshold used to define the
cylinder. A suitable power of $v$, together with
\eqref{AxisDisplacementEta}, gives
$[C]\leq\tau(v)+c_1\delta$, with $c_1$ fixed. Since $\tau(v)\geq1$,
the additive terms are absorbed in $K\tau(v)$.
\end{proof}

The particular choice of $10\eta$ is not important. We use this
neighbourhood because it is large enough to accommodate the coarse
geometric errors that arise in the arguments that follow. Replacing
$10\eta$ by another fixed multiple of $\eta$ would only affect the
constant $K$.

We now formulate the small cancellation condition that we shall use.
It is the coarse analogue of the corresponding condition for free
groups. Recall that, in the Cayley tree of a free group, an element is
$N$-small cancellation if it has sufficiently large translation
length and a fundamental segment of its axis has only short
intersections with its nontrivial translates. In the present setting,
the quasi-axis $A(v)$ replaces the axis, while actual intersections
are replaced by coarse intersections of fixed neighbourhoods. Motivated by this analogy, we use the following version of the small cancellation condition for hyperbolic elements (cf. \cite[Definition 4.6]{AF22}).

\begin{definition}\label{def:small-cancellation}
Let $N$ be a positive integer and let $v\in\Gamma$ be nontrivial.
We say that $v$ is $N$-small cancellation (with respect to $X$) if:
\begin{itemize}
    \item $\tau(v)>N$;
    \item for every $f\notin\langle v\rangle$,
    \[
    \Delta(v,fvf^{-1})<\frac{1}{N}\tau(v).
    \]
\end{itemize}
\end{definition}

Notice that the second condition implies
$C_\Gamma(v)=\langle v\rangle$. Indeed, if
$f\in C_\Gamma(v)$, then $fvf^{-1}=v$, and hence
$\Delta(v,fvf^{-1})=\infty$. Thus $f\in\langle v\rangle$.
In particular, an $N$-small cancellation element is not a proper
power.

\subsection{Model companions and existential formulas}\label{sub:model-theory}

Throughout, a first-order theory is assumed to be consistent.

\begin{definition}[Existentially closed substructure]
Let $\mathcal A\subseteq\mathcal B$ be first-order structures in the
same language. We say that $\mathcal A$ is existentially closed in
$\mathcal B$ if for every
existential formula $\phi(\bar{x})$ and every tuple
$\bar a\in A$, if $\mathcal B\models\phi(\bar a)$, then $\mathcal A\models\phi(\bar a)$.
\end{definition}

\begin{definition}[Model complete theory]
A first-order theory $\mathcal{T}$ is model complete if for any two models $\mathcal{A}$, $\mathcal{B}$ of $\mathcal{T}$ with $\mathcal{A}\subseteq\mathcal{B}$ we have that $\mathcal{A}$ is existentially closed in $\mathcal{B}$.
\end{definition}

\begin{fact}(\cite[Theorem 3.5.1]{MR1347464})\label{ModelComplete}
The first-order theory $\mathcal{T}$ is model complete if and only if every first-order formula is equivalent modulo $\mathcal{T}$ to an existential formula.
\end{fact}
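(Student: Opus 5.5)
The statement immediately above is Fact~\ref{ModelComplete}, Robinson's test: $\mathcal T$ is model complete if and only if every formula is equivalent modulo $\mathcal T$ to an existential formula. The reverse direction is short, so the plan is to dispose of it first and then spend the effort on the forward direction.

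\emph{Reverse direction.} Suppose every formula is $\mathcal T$-equivalent to an existential one. Let $\mathcal A\subseteq\mathcal B$ be models of $\mathcal T$, let $\phi(\bar x)$ be existential, and let $\bar a\in A$ with $\mathcal B\models\phi(\bar a)$. Then $\mathcal A\models\neg\phi(\bar a)$ is impossible. Indeed, $\neg\phi$ is $\mathcal T$-equivalent to some existential $\psi$. Existential formulas pass upward from substructures, since witnesses in $A$ remain witnesses in $B$ and quantifier-free formulas are preserved. So $\mathcal A\models\psi(\bar a)$ would give $\mathcal B\models\psi(\bar a)$, that is, $\mathcal B\models\neg\phi(\bar a)$, a contradiction. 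Hence $\mathcal A\models\phi(\bar a)$, as required.

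\emph{Forward direction.} Assume $\mathcal T$ is model complete. I will argue by induction on formula complexity, in prenex form, and the key claim is that every universal formula is $\mathcal T$-equivalent to an existential one.

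\begin{itemize}
\item \emph{Setting up the claim.} Fix a universal $\phi(\bar x)$ and let $\Sigma$ be the set of existential formulas $\psi(\bar x)$ with $\mathcal T\models\psi\to\phi$. By compactness, it suffices to show $\mathcal T\cup\{\phi(\bar c)\}\cup\{\neg\psi(\bar c):\psi\in\Sigma\}$ is inconsistent, where $\bar c$ are new constants. Inconsistency yields finitely many $\psi_i$ with $\mathcal T\models\phi\to\bigvee\psi_i$, and $\bigvee\psi_i$ is existential.
\item \emph{Building an extension.} Suppose instead that $\mathcal A\models\mathcal T$ and $\bar a\in A$ realize this set. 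Consider the theory $\mathcal T\cup\operatorname{Diag}(\mathcal A)\cup\{\neg\phi(\bar a)\}$. If it is consistent, it has a model $\mathcal B\supseteq\mathcal A$ with $\mathcal B\models\neg\phi(\bar a)$. Since $\neg\phi$ is existential, model completeness gives $\mathcal A\models\neg\phi(\bar a)$, a contradiction.
\item \emph{The main obstacle: the inconsistent case.} If the theory is inconsistent, compactness gives a quantifier-free $\theta(\bar a,\bar b)$ true in $\mathcal A$ with $\mathcal T\models\theta(\bar x,\bar y)\to\phi(\bar x)$. Then $\exists\bar y\,\theta$ lies in $\Sigma$. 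But $\mathcal A\models\exists\bar y\,\theta(\bar a,\bar y)$, contradicting $\mathcal A\models\neg\psi(\bar a)$ for all $\psi\in\Sigma$. The care needed is to treat the parameters $\bar y$ correctly when passing from the diagram to a formula over $\bar x$.
\end{itemize}

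Finally, the general case follows by induction on the prenex form, eliminating the innermost quantifier block at each step. An existential block over an existential formula is again existential. A universal block $\forall\bar z\,\chi$ with $\chi$ existential can be handled as follows. Write $\forall\bar z\,\chi$ as $\neg\exists\bar z\,\neg\chi$. Since $\neg\chi$ is universal, the claim converts it to an existential formula, so $\exists\bar z\,\neg\chi$ is existential. Then $\neg\exists\bar z\,\neg\chi$ is universal, and applying the claim once more makes it existential. Alternatively, one can simply cite \cite[Theorem 3.5.1]{MR1347464}.
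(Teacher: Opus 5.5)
Your proposal is correct. It is the standard proof of Robinson's test: the reverse direction uses upward persistence of existential formulas, and the forward direction uses the diagram-plus-compactness argument to show that universal formulas are $\mathcal T$-equivalent to existential ones, followed by induction on prenex form. The paper gives no proof of this fact and simply cites it from the textbook, so your argument supplies the standard reasoning behind that citation.
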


Let $\mathcal{T}$ be a first-order theory. We denote by $\mathcal{T}^{\forall}$ the set of all universal first-order sentences which are consequences of $\mathcal{T}$.

\begin{definition}[Model companion]
Let $\mathcal{T}$ be a first-order theory. Then $\mathcal{T}$ admits a model companion if there exists a model complete first-order theory $\mathcal{U}$ such that $\mathcal{T}^{\forall}=\mathcal{U}^{\forall}$. The theory $\mathcal{U}$ is called the model companion of $\mathcal{T}$.
\end{definition}

We now fix a non-elementary torsion-free hyperbolic group $\Gamma$ and
specialize to its first-order theory. By \cite{Sela07} (see also \cite{GLS24}),
\[
\Gamma\equiv\Gamma*\langle z\rangle.
\]
It follows immediately that $\operatorname{Th}(\Gamma)$ is not model
complete. Indeed, consider the inclusion
\[
\Gamma*\langle z^2\rangle\leq\Gamma*\langle z\rangle.
\]
The group $\Gamma*\langle z^2\rangle$ is isomorphic to
$\Gamma*\langle z\rangle$, and hence both groups are models of
$\operatorname{Th}(\Gamma)$. However, the inclusion is not elementary:
the element $z^2$ has a square root in
$\Gamma*\langle z\rangle$, whereas it has no square root in
$\Gamma*\langle z^2\rangle$.

We shall use the following immediate consequence of the completeness of
$\operatorname{Th}(\Gamma)$. If $\mathcal U$ is a theory such that
\[
\mathcal U^\forall=\operatorname{Th}(\Gamma)^\forall
\]
and an existential sentence holds in some model of $\mathcal U$, then
it holds in $\Gamma$, and hence in every model of
$\operatorname{Th}(\Gamma)$. Indeed, otherwise its negation, being
universal, would belong to
$\operatorname{Th}(\Gamma)^\forall=\mathcal U^\forall$, a
contradiction.

Finally, we record a simple but important observation concerning
existential formulas in $\Gamma*\langle z\rangle$. Namely, an
existential formula cannot distinguish $z$ from any of its nonzero
powers. More precisely, if $\phi(x)$ is an existential formula and
\[
\Gamma*\langle z\rangle\models\phi(z),
\]
then
\[
\Gamma*\langle z\rangle\models\phi(z^k)
\]
for every nonzero integer $k$. Indeed, the homomorphism
\[
\Gamma*\langle z\rangle\longrightarrow\Gamma*\langle z\rangle
\]
which restricts to the identity on $\Gamma$ and sends $z$ to $z^k$
is an embedding. Since existential formulas are preserved under
embeddings, the assertion follows.

\section{Uniform Small Cancellation}

The construction underlying our main result is a coarse-geometric
version of the long-power construction for actions on trees
\cite[Proposition~4.8]{MR4206600}.  Its basic mechanism is to concatenate
long powers with distinct exponents, so that sufficiently large
overlaps between two translates of the resulting element force the
corresponding long pieces to align.  In a tree this alignment is
detected by actual intersections of axes.  In a hyperbolic Cayley
graph, both the axes and their intersections are only coarsely
defined, and the main point is to show that the same rigidity survives
with constants uniform in the elements involved.  Fact~\ref{Periodicity}
provides precisely the rigidity needed to recover this alignment from
a long coarse overlap.

The following lemma contains the geometric core of the argument.
It shows that, once a noncommuting pair has been put in a suitable
normal form, the long-power construction produces an element whose
translation length is cubic in the parameter $m$, whereas the overlap
with any translate not arising from one of its own powers is only
quadratic in $m$.

\begin{proposition}\label{HyperbolicSmallCancellation}
There is a positive integer $m_0$, depending only on $X$, with the
following property. Suppose $g,h\in\Gamma$ satisfy
\begin{itemize}
    \item $L=\tau(h)\geq 1$,
    \item $[g,h]\neq 1$, and
    \item there exists $o\in A(h)$ such that $d(o,go)\leq 4L$.
\end{itemize}
For every integer $m\geq m_0$, put $e_i=m^2+i\quad (1\leq i\leq m)$, and set $u=h^{e_1}g\,h^{e_2}g\cdots h^{e_m}g$.

Then
\begin{enumerate}
    \item $\tau(u)\geq \frac12 m^3L$;
    \item if $f\notin\langle u\rangle$, then $\Delta(u,fuf^{-1})\leq 100m^2L$.
\end{enumerate}
\end{proposition}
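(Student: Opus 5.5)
The plan is to build an explicit bi-infinite quasigeodesic for $u$ and read both claims off its structure. Fix $o\in A(h)$ with $d(o,go)\le 4L$ and form the piecewise path $P$ obtained by concatenating, for each $j$, the geodesic segment from $o$ to $h^{e_j}o$ (a segment of $A(h)$ up to $\eta$) and then the short segment from $h^{e_j}o$ to $h^{e_j}go$ (length $\le 4L$), translated appropriately. In other words, $P$ is the orbit path through the points $o,\ h^{e_1}o,\ h^{e_1}go,\ h^{e_1}gh^{e_2}o,\dots$, and then its $\langle u\rangle$-translates. Each long piece is a translate of a segment of $A(h)$ of length $e_jL\pm\eta\ge m^2L-\eta$. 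Consecutive long pieces are $x A(h)$ and $xh^{e_j}gA(h)$, joined through a gap of length $\le 4L$.

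The first step, which I expect to be the main obstacle, is to show that the backtracking at each junction is bounded by a uniform multiple of $L$, say $(K+10)L$. Consecutive long pieces lie on the translates $A(h)$ and $gA(h)$ (after translating). Since $[g,h]\ne1$, Fact~\ref{Periodicity} with $r=g$ bounds their coarse overlap by $KL$. So the Gromov product at a junction, between the incoming piece reversed and the outgoing piece, is at most about $KL+4L+O(\eta)$.

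Because every piece has length $\ge m^2L\gg (K+10)L$ once $m\ge m_0$, the local-to-global principle for such broken geodesics applies. Concretely, this is the standard lemma that a concatenation of geodesics of length $\ge 2C+O(\delta)$ with Gromov products $\le C$ at the breakpoints is a quasigeodesic, with distances additive up to $2C$ per junction. It gives
\[
d(o,u^n o)\ \ge\ n\sum_j\bigl(e_jL-\eta-2(K+10)L\bigr)\ \ge\ n\, m\bigl(m^2L-O(KL)\bigr).
\]
Dividing by $n$ yields $\tau(u)\ge m^3L-O(mKL)\ge \tfrac12 m^3L$ for $m\ge m_0$, which is claim~(1). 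The same argument shows that $P$ lies within a uniform distance $C'=O(KL)$ of $A(u)$, and conversely, by stability of quasigeodesics. Here some care is needed, because the stability constant may scale with $L$; I would handle this by working with the points $h^{k}o$ on the long pieces, which are $\eta$-close to actual geodesics.

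For claim~(2), suppose $\Delta(u,fuf^{-1})>100m^2L$. Then $A(u)^{+100\delta}$ and $fA(u)^{+100\delta}$ share a segment of length $>100m^2L$. Transferring to $P$ and $fP$, this common stretch is much longer than twice the longest piece, which has length $\le (m^2+m)L+\eta$. So it contains a full long piece $\sigma$ of $P$, of exponent $e_j$, in its interior, together with its neighbours. On $fP$, the part coarsely overlapping $\sigma$ must come from a single long piece $f\sigma'$: a junction inside would force an overlap of a translate of $A(h)$ with a non-commuting translate exceeding $KL$, contradicting Fact~\ref{Periodicity}. Thus $\sigma\subset (f\sigma')^{+10\eta}$ over length $\ge m^2L-O(KL)$, and Fact~\ref{Periodicity} forces the corresponding translate of $A(h)$ to coincide with the one carrying $\sigma$. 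Hence $f$ identifies the two $h$-cosets carrying these pieces.

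Next I compare the lengths. The two pieces have lengths $e_jL$ and $e_{j'}L$. Their endpoints are where the neighbouring pieces branch off, and these branch points are determined up to $O(KL)$ by Fact~\ref{Periodicity}. So $|e_j-e_{j'}|L\le O(KL)$ forces $|e_j-e_{j'}|\le O(K)$. This alone does not give $j=j'$, and that is the delicate point. I would propagate along the long overlap, which spans many consecutive pieces since $100m^2L$ exceeds the length of about $100$ pieces. The offset between the two coincident axes must also be a multiple of $L$ up to $O(\eta)$, and each branching point is pinned up to $O(KL)$. From this the consecutive exponents match exactly: matching $e_j,e_{j+1},\dots$ with $e_{j'},e_{j'+1},\dots$ as strictly increasing consecutive integers forces $j=j'$.

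Once the pieces match, $f$ and the appropriate element $u^k$ (from $\langle u\rangle$, suitably shifted) agree on the alignment of two consecutive axis translates. Then $u^{-k}f$ stabilizes coarsely a long segment of a translate of $A(h)$ and of its neighbour. By Fact~\ref{Periodicity} and \eqref{Malnormal}, this element lies in a conjugate of $C_\Gamma(h)$ and also fixes the branching. This forces $u^{-k}f=1$, so $f\in\langle u\rangle$, a contradiction.
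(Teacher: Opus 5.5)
Your part (1) is essentially the paper's Claims~1--2: you bound the junction backtracking using Fact~\ref{Periodicity} for $A(h)$ versus $g^{\pm1}A(h)$, then run a local-to-global argument. The worry you flag there is real. Fellow-travelling within $O(KL)$ is useless for Fact~\ref{Periodicity}, which is stated for $10\eta$-neighbourhoods. As the paper does, you must show that each long piece, with $O(L)$ trimmed from its ends, is $2\delta$-close to the long geodesics $[x_r,x_t]$, and hence $O(\delta)$-close to a geodesic for $u$.

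The genuine gap is in part (2), where you try to get $j\equiv j'\pmod m$ by comparing lengths. Piece lengths and branch points are only pinned down to within $O(KL)$, so you only get $|e_j-e_{j'}|=O(K)$. Propagating along the overlap cannot improve this. The sequences $e_j,e_{j+1},\dots$ and $e_{j+1},e_{j+2},\dots$ are both increasing runs of consecutive integers and differ termwise by exactly $1$, which is invisible at resolution $KL$. The shift becomes metrically visible only at the wrap-around $e_m\to e_1$. An overlap of length $100m^2L$ spans only about $100$ pieces, so for large $m$ it need not contain one.

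Your claim that the offset between coincident axes is a multiple of $L$ is also unjustified. The element $h$ may be a proper power (in the application $h=a^r$), so $f$ only identifies cosets of $C=C_\Gamma(h)$, not of $\langle h\rangle$.

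The missing idea is that exactness comes from malnormality, not from metric comparison. Record a match as $fC_jf^{-1}=C_i$, where $C_k=P_kCP_k^{-1}$ and $P_{k+1}=P_kh^{e_k}g$. Then show that consecutive pieces have consecutive matches, so a long overlap yields three consecutive matches. This uses Fact~\ref{Periodicity}, the pairwise distinctness of the $C_k$, and the smallness of the gaps. The three matches then give:
\begin{itemize}
\item From the first, $fP_j=P_ic$ with $c\in C$.
\item The second gives $g^{-1}(h^{-e_i}ch^{e_j})g\in C$. Since $g\notin C$, malnormality forces $c=h^{e_i-e_j}$, i.e.\ $fP_{j+1}=P_{i+1}$ exactly.
\item The third gives $g^{-1}h^{e_{j+1}-e_{i+1}}g\in C$, hence $e_{j+1}=e_{i+1}$. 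So $i\equiv j\pmod m$ and $f=u^k$.
\end{itemize}
Your final step, $u^{-k}f\in C_j\cap C_{j+1}=\{1\}$, is this same malnormality argument. It has to be used one step earlier, to determine the index shift itself.
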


\begin{proof}
The idea of the proof is as follows. Fix $o\in A(h)$ as in the hypothesis. The word
\[
u=h^{e_1}g\,h^{e_2}g\cdots h^{e_m}g
\]
determines a broken path from $o$ to $uo$. This path consists of long
segments labelled by the powers $h^{e_i}$, separated by short segments
labelled by $g$. We first show that the backtracking between successive
long segments is uniformly bounded. After trimming bounded portions
from their ends, we obtain corresponding successive intervals on a
geodesic joining $u^-$ to $u^+$. Since there are $m$ such intervals,
each of length comparable to $m^2L$, this gives the required lower
bound on $\tau(u)$.

For the second assertion, we show that a sufficiently long overlap
between this geodesic and one of its translates forces three
consecutive translates of the quasi-axis of $h$ to have the same
associated cyclic subgroups. Malnormality, together with the choice of
the exponents, then determines the translating element and shows that
it belongs to $\langle u\rangle$.

We now introduce some auxiliary notation and divide the proof into five
claims. Put
\[
C=C_\Gamma(h).
\]
Since $[g,h]\neq1$, we have $g\notin C$. To keep track of the successive long segments and their translates
under powers of $u$, we repeat the sequence of exponents
$e_1,\ldots,e_m$ in both directions, setting $e_{i+m}=e_i$ for every
$i\in\mathbb Z$. Starting with $P_1=1$, we then define the points
$P_i$ successively by $P_{i+1}=P_i h^{e_i}g$. After $m$ steps we
have multiplied by one full copy of the word $u$. Consequently, this
construction satisfies
\[
P_{i+m}=uP_i, \textrm{ for every } i\in\mathbb Z.
\]

Recall that
\[
d(o,go)\leq4L,
\]
and set
\[
x_i=P_io,\qquad
A_i=P_iA(h),\qquad
C_i=P_iCP_i^{-1},\qquad 
d_i=d(x_i,x_{i+1}).
\]

\medskip
\noindent
\textit{Claim 1.}
There exists a constant $b$, depending only on $X$, such that
\begin{equation}\label{LocalEstimates}
\left|d_i-e_iL\right|\leq4L+\eta
\qquad\text{and}\qquad
(x_{i-1},x_{i+1})_{x_i}\leq bL.
\end{equation}

\smallskip
\noindent\emph{Proof of Claim 1.}
By the axis-displacement estimate \eqref{AxisDisplacementEta}, there is
a constant $\eta$, depending only on $X$, such that
\[
\bigl|d(o,h^{e_i}o)-e_iL\bigr|\leq\eta,
\]
On the other hand, the hypothesis
$d(o,go)\leq4L$, together with the fact that $h^{e_i}$ acts by an
isometry, gives
\[
d(h^{e_i}o,h^{e_i}go)=d(o,go)\leq4L.
\]
Since
\[
d_i=d(o,h^{e_i}go),
\]
the triangle inequality and its reverse form imply
\[
\bigl|d_i-d(o,h^{e_i}o)\bigr|\leq4L.
\]
Combining the two estimates yields
\[
\bigl|d_i-e_iL\bigr|\leq4L+\eta.
\]

We now prove the second inequality. Since the Gromov product is
invariant under isometries, we translate the configuration by
$P_i^{-1}$, so that $x_i=P_io$ is moved to $o$. Set
\[
z_-=P_i^{-1}x_{i-1}=g^{-1}h^{-e_{i-1}}o,
\qquad
z_+=P_i^{-1}x_{i+1}=h^{e_i}go.
\]
Then
\[
(x_{i-1},x_{i+1})_{x_i}=(z_-,z_+)_o.
\]
The advantage of this normalization is that the two geodesics issuing
from $o$ can be compared directly with $g^{-1}A(h)$ and $A(h)$,
respectively. We shall show that, apart from uniformly bounded
portions at their ends, $[o,z_-]$ lies close to $g^{-1}A(h)$ and
$[o,z_+]$ lies close to $A(h)$. A large value of
$(z_-,z_+)_o$ would then force a long coarse intersection between
these two quasi-axes, contradicting \eqref{PeriodicOverlap}.

We make these statements precise. Consider first the geodesic triangle with vertices
\[
o,\qquad h^{e_i}o,\qquad z_+=h^{e_i}go.
\]
Since
\[
d(h^{e_i}o,z_+)=d(o,go)\leq 4L,
\]
let $p\in[o,z_+]$ satisfy
\[
d(p,z_+)>4L+\eta.
\]
By $\delta$-thinness, $p$ lies within distance $\delta$ of either
$[o,h^{e_i}o]$ or $[h^{e_i}o,z_+]$. The latter is impossible.
Indeed, if $q\in[h^{e_i}o,z_+]$ satisfies $d(p,q)\leq\delta$, then
\[
d(p,z_+)
\leq d(p,q)+d(q,z_+)
\leq \delta+4L
\leq \eta+4L,
\]
a contradiction. Hence $p$ lies within distance $\delta$ of
$[o,h^{e_i}o]$.

Now $o,h^{e_i}o\in A(h)$, and $A(h)$ is $Q$-quasiconvex. Therefore
\[
[o,h^{e_i}o]\subseteq A(h)^{+Q}.
\]
It follows that
\[
d(p,A(h))\leq \delta+Q\leq\eta,
\]
by the choice of $\eta$. Thus
\begin{equation}\label{PositiveAxisEstimate}
[o,z_+]\setminus B(z_+,4L+\eta)
\subseteq A(h)^{+\eta}.
\end{equation}

Similarly, consider the geodesic triangle with vertices
\[
o,\qquad g^{-1}o,\qquad z_-=g^{-1}h^{-e_{i-1}}o.
\]
Since
\[
d(o,g^{-1}o)=d(o,go)\leq4L,
\]
let $p\in[o,z_-]$ satisfy
\[
d(o,p)>4L+\eta.
\]
By $\delta$-thinness, $p$ lies within distance $\delta$ of either
$[o,g^{-1}o]$ or $[g^{-1}o,z_-]$. The former is impossible.
Indeed, if $q\in[o,g^{-1}o]$ satisfies $d(p,q)\leq\delta$, then
\[
d(o,p)
\leq d(o,q)+d(q,p)
\leq4L+\delta
\leq4L+\eta,
\]
a contradiction. Hence $p$ lies within distance $\delta$ of
$[g^{-1}o,z_-]$.

Both endpoints of $[g^{-1}o,z_-]$ belong to $g^{-1}A(h)$:
indeed, $o,h^{-e_{i-1}}o\in A(h)$. Since $A(h)$ is
$Q$-quasiconvex, so is its translate $g^{-1}A(h)$, and therefore
\[
[g^{-1}o,z_-]\subseteq \bigl(g^{-1}A(h)\bigr)^{+Q}.
\]
Consequently,
\[
d\bigl(p,g^{-1}A(h)\bigr)\leq\delta+Q\leq\eta.
\]
Thus
\begin{equation}\label{NegativeAxisEstimate}
[o,z_-]\setminus B(o,4L+\eta)
\subseteq \bigl(g^{-1}A(h)\bigr)^{+\eta}.
\end{equation}

We now bound the Gromov product. Let $b>0$ be a constant, to be fixed
below, and put
\[
\beta=bL.
\]
For any fixed $b$, since $e_i\geq m^2$, the first inequality in
\eqref{LocalEstimates} gives
\[
d(o,z_+)\geq m^2L-4L-\eta.
\]
Thus we may choose $m_0$, depending only on $X$ and $b$, so that, for
$m\geq m_0$,
\begin{equation}\label{LongPositiveSegment}
d(o,z_+)>\beta+4L+\eta.
\end{equation}

Suppose now that $(z_-,z_+)_o>\beta$. By our choice of $\eta$, the
initial portions of $[o,z_-]$ and $[o,z_+]$, up to distance
$\beta-\eta$ from $o$, fellow travel within distance $\eta$.
Discarding from the former its initial portion of length $4L+\eta$,
\eqref{NegativeAxisEstimate} shows that the remaining segment lies in
$\bigl(g^{-1}A(h)\bigr)^{+\eta}$. The corresponding portion of
$[o,z_+]$ therefore lies in
$\bigl(g^{-1}A(h)\bigr)^{+2\eta}$. Moreover, our choice of $m_0$
ensures, by \eqref{LongPositiveSegment}, that this portion is disjoint
from $B(z_+,4L+\eta)$. Hence \eqref{PositiveAxisEstimate} shows that
it also lies in $A(h)^{+\eta}$.

Consequently,
\[
A(h)^{+10\eta}\cap
\bigl(g^{-1}A(h)\bigr)^{+10\eta}
\]
has diameter at least
\[
\beta-4L-2\eta.
\]
Since $\beta=bL$ and $L\geq1$, we may now fix $b$, depending only on
$X$, sufficiently large that this quantity is greater than $KL$.
This contradicts \eqref{PeriodicOverlap}, since $g^{-1}\notin C$.

Hence
\[
(z_-,z_+)_o\leq bL,
\]
and therefore
\[
(x_{i-1},x_{i+1})_{x_i}\leq bL.
\]
\qed

Thus Claim~1 gives uniform control of the geometry at each junction of
the broken path determined by the word $u$: the length of each long
piece is $e_iL$ up to an error of order $L$, while the backtracking
between two successive pieces is also bounded by a constant multiple
of $L$. Since $e_i\asymp m^2$, the long pieces become arbitrarily
large compared with these errors as $m$ grows. We shall now use this
local control to obtain a global description of the broken path: after
trimming bounded portions from its long pieces, they occur in order
near a geodesic joining $u^-$ to $u^+$. 

Before moving to claim 2 recall that

\[
P_{i+1}=P_i h^{e_i}g, \qquad
x_i=P_io,\qquad
A_i=P_iA(h),\qquad
d_i=d(x_i,x_{i+1}).
\]

\ \\ \\
\noindent\emph{Claim 2.}
There exist a constant $B$, depending only on $X$, a geodesic line
$\ell$ joining $u^-$ to $u^+$, and pairwise disjoint intervals
$J_i\subseteq\ell$, indexed by $i\in\mathbb Z$, with the following
properties. As $\ell$ is traversed from $u^-$ to $u^+$, the intervals
$J_i$ occur in increasing order of $i$. For every $i\in\mathbb Z$,
\[
 J_i\subseteq A_i^{+\eta}
\]
and
\[
 (e_i-B)L\leq |J_i|\leq(e_i+B)L.
\]
Moreover, the segment of $\ell$ between $J_i$ and $J_{i+1}$ has
length at most $BL$. The orientation of $J_i$ from $u^-$ to $u^+$
agrees with the orientation of $A_i=P_iA(h)$ from $P_i h^-$ to
$P_i h^+$.

Finally,
\begin{equation}\label{LengthBeforeChoice}
 \tau(u)\geq
 \left(\sum_{i=1}^m e_i-Bm\right)L.
\end{equation}

\begin{figure}[ht]
\centering

\resizebox{0.95\textwidth}{!}{%
\begin{tikzpicture}[
    x=1cm,
    y=1cm,
    >=Latex,
    every node/.style={font=\small},
    point/.style={circle,fill=black,inner sep=1.8pt},
    fwd/.style={->,line width=1.35pt,forwardred},
    bwd/.style={->,line width=1.35pt,backwardblue},
    guide/.style={dashed,softgray,line width=.5pt}
]


\coordinate (xm1) at (-6.4,3.7);
\coordinate (x0)  at (-3.7,3.7);
\coordinate (x1)  at (-0.5,3.7);
\coordinate (x2)  at ( 2.7,3.7);
\coordinate (x3)  at ( 6.0,3.7);
\coordinate (xm)  at (10.1,3.7);
\coordinate (xmp) at (13.2,3.7);

\node at (-8.0,3.7) {$\cdots$};
\node at (7.9,3.7) {$\cdots$};
\node at (14.8,3.7) {$\cdots$};

\foreach \p in {xm1,x0,x1,x2,x3,xm,xmp}
  \node[point] at (\p) {};


\node[above=9pt] at (xm1) {$x_{-1}$};
\node[above=9pt] at (x0)  {$x_0$};
\node[above=9pt] at (x1)  {$x_1=o$};
\node[above=9pt] at (x2)  {$x_2$};
\node[above=9pt] at (x3)  {$x_3$};
\node[above=9pt] at (xm)  {$x_m$};
\node[above=9pt] at (xmp) {$x_{m+1}=u.o$};


\draw[bwd]
  (xm1) -- node[midway,above=2pt,text=backwardblue]
  {$h^{e_{m-1}}g$} (x0);

\draw[bwd]
  (x0) -- node[midway,above=2pt,text=backwardblue]
  {$h^{e_m}g$} (x1);


\draw[fwd]
  (x1) -- node[midway,above=2pt,text=forwardred]
  {$h^{e_1}g$} (x2);

\draw[fwd]
  (x2) -- node[midway,above=2pt,text=forwardred]
  {$h^{e_2}g$} (x3);

\draw[line width=1.35pt,forwardred]
  (x3) -- (7.25,3.7);

\draw[line width=1.35pt,forwardred]
  (8.55,3.7) -- (xm);

\draw[fwd]
  (xm) -- node[midway,above=2pt,text=forwardred]
  {$h^{e_m}g$} (xmp);

%

\node[below=15pt,anchor=north,align=center] at (xm1)
 {$g^{-1}h^{-e_m}g^{-1}h^{-e_{m-1}}.o$};

\node[below=39pt,anchor=north] at (x0)
 {$g^{-1}h^{-e_m}.o$};

\node[below=15pt] at (x1) {$o$};

\node[below=15pt] at (x2)
 {$h^{e_1}g.o$};

\node[below=15pt] at (x3)
 {$h^{e_1}g h^{e_2}g.o$};

\node[below=15pt] at (xmp)
 {$u.o$};


\draw[
 decorate,
 decoration={brace,amplitude=5pt,mirror},
 line width=.65pt
]
(-0.5,2.35) -- (13.2,2.35)
node[midway,below=8pt]
{$P_{i+m}=uP_i$};


\coordinate (ellL) at (-7.6,-0.1);
\coordinate (ellR) at (14.3,-0.1);

\draw[<->,line width=.8pt] (ellL) -- (ellR);

\node[left=4pt]  at (ellL) {$u^-$};
\node[right=4pt] at (ellR) {$u^+$};

\node[above=5pt] at (-7.15,-0.1) {$\ell$};


\draw[line width=2.7pt,backwardblue]
  (-6.25,-0.1) -- (-4.55,-0.1);
\node[above=8pt,text=backwardblue] at (-5.4,-0.1)
  {$J_{-1}$};

\draw[line width=2.7pt,backwardblue]
  (-3.65,-0.1) -- (-1.65,-0.1);
\node[above=8pt,text=backwardblue] at (-2.65,-0.1)
  {$J_0$};

\draw[line width=2.7pt,forwardred]
  (-0.55,-0.1) -- (1.75,-0.1);
\node[above=8pt,text=forwardred] at (0.6,-0.1)
  {$J_1$};

\draw[line width=2.7pt,forwardred]
  (2.75,-0.1) -- (5.15,-0.1);
\node[above=8pt,text=forwardred] at (3.95,-0.1)
  {$J_2$};

\node at (7.4,-0.1) {$\cdots$};

\draw[line width=2.7pt,forwardred]
  (9.45,-0.1) -- (12.05,-0.1);
\node[above=8pt,text=forwardred] at (10.75,-0.1)
  {$J_m$};


\draw[guide] (xm1) -- (-5.4,-0.1);
\draw[guide] (x0)  -- (-2.65,-0.1);
\draw[guide] (x1)  -- (0.6,-0.1);
\draw[guide] (x2)  -- (3.95,-0.1);
\draw[guide] (xm)  -- (10.75,-0.1);


\node[below=13pt] at (3.2,-0.1)
 {$J_i\subseteq A_i^{+\eta},
   \qquad
   |J_i|=e_iL+O(L)$};


\begin{scope}[yshift=-1.45cm]

\draw[line width=1.4pt,forwardred]
 (-3.8,0) -- (-3.0,0);
\node[right=5pt] at (-3.0,0)
 {forward period};

\draw[line width=1.4pt,backwardblue]
 (1.0,0) -- (1.8,0);
\node[right=5pt] at (1.8,0)
 {backward extension};

\draw[dashed,line width=.8pt,softgray]
 (6.4,0) -- (7.2,0);
\node[right=5pt] at (7.2,0)
 {coarse correspondence};

\end{scope}

\end{tikzpicture}
}
\caption{The bi-infinite broken path associated with
$u=h^{e_1}g\cdots h^{e_m}g$ and the ordered intervals $J_i$
on a geodesic line $\ell$ joining $u^-$ to $u^+$.
After uniformly bounded portions are removed from the ends of
$[x_i,x_{i+1}]$, the retained portions fellow travel the
corresponding intervals $J_i\subseteq A_i^{+\eta}$.}
\label{fig:broken-path}

\end{figure}

\smallskip
\noindent\emph{Proof of Claim 2.}
We first promote the local control given by Claim~1 to uniform control
of arbitrary subchains of the broken path.  More precisely, we shall
show that, viewed from any intermediate vertex $x_i$, the two ends of
a subchain extending on either side of $x_i$ have uniformly bounded
Gromov product.  This will imply that the broken path makes definite
progress: the distance between two of its vertices is bounded below
by the sum of the lengths of the intervening pieces, up to a uniformly
bounded loss at each junction.

Increasing $m_0$ if necessary, we may assume that
\[
 d_i>10\beta+100\delta, \textrm{ for every } i.
\]
We first claim that
\begin{equation}\label{OneSidedProduct}
 (x_r,x_{i+1})_{x_i}\leq\beta+\delta
 \qquad(r<i).
\end{equation}
Thus the bound on the backtracking at $x_i$ given by Claim~1 remains
uniform even when the preceding vertex $x_{i-1}$ is replaced by an
arbitrarily distant vertex $x_r$ of the broken path.

We prove \eqref{OneSidedProduct} by induction on $i-r$.  The case
$r=i-1$ is precisely Claim~1.  For the inductive step, the induction
hypothesis gives
\[
 (x_r,x_i)_{x_{i-1}}\leq\beta+\delta.
\]
Hence
\[
 (x_r,x_{i-1})_{x_i}
 =d_{i-1}-(x_r,x_i)_{x_{i-1}}
 \geq d_{i-1}-\beta-\delta
 >\beta+2\delta.
\]
Together with
\[
 (x_{i-1},x_{i+1})_{x_i}\leq\beta,
\]
the four-point inequality \eqref{FourPoint} now gives
\eqref{OneSidedProduct}.

Applying the same argument in the reverse direction gives
\[
 (x_{i-1},x_t)_{x_i}\leq\beta+\delta
 \qquad(t>i).
\]
Now let $r<i<t$.  As in the induction above,
\[
 (x_r,x_{i-1})_{x_i}>\beta+2\delta.
\]
On the other hand, the four-point inequality gives
\[
 \min\bigl\{(x_r,x_{i-1})_{x_i},
            (x_r,x_t)_{x_i}\bigr\}
 \leq (x_{i-1},x_t)_{x_i}+\delta
 \leq\beta+2\delta.
\]
The first term in the minimum is strictly larger than
$\beta+2\delta$, and hence
\begin{equation}\label{TwoSidedProduct}
 (x_r,x_t)_{x_i}\leq\beta+2\delta
 \qquad(r<i<t).
\end{equation}
In other words, the amount of backtracking at an intermediate vertex
remains uniformly bounded independently of how far the two endpoints
of the subchain are chosen.

This also gives a lower bound on the total progress along the broken
path.  Indeed, from \eqref{OneSidedProduct},
\[
 d(x_r,x_{i+1})
 \geq d(x_r,x_i)+d_i-2(\beta+\delta).
\]
Iterating this inequality for $i=r+1,\ldots,t-1$ yields
\begin{equation}\label{ChainLength}
 d(x_r,x_t)\geq
 \sum_{i=r}^{t-1}d_i
 -2(\beta+\delta)(t-r-1).
\end{equation}
Thus the only loss, compared with the total length of the broken path,
is a uniformly bounded amount at each junction.

We now remove the portions of the broken path near the vertices, where
the bounded backtracking detected above may occur.  Put
\[
 R=\beta+4L+10\eta,
\]
and let $I_i$ be the subsegment of $[x_i,x_{i+1}]$ obtained by
removing a segment of length $R$ from each end.  Increasing $m_0$ if
necessary, we may assume that $d_i>2R$ for every $i$, so that each
$I_i$ is nonempty.

We first compare $I_i$ with geodesics joining distant vertices of the
broken path.  Fix
\[
 r<i<i+1<t.
\]

Consider first the geodesic triangle with vertices
$x_r,x_i,x_{i+1}$.  By \eqref{OneSidedProduct},
\[
 (x_r,x_{i+1})_{x_i}\leq\beta+\delta.
\]
We claim that
\[
 I_i\subseteq[x_r,x_{i+1}]^{+\delta}.
\]
Indeed, let $p\in I_i$ and put $s=d(x_i,p)$.  By
$\delta$-thinness, $p$ lies within distance $\delta$ of either
$[x_i,x_r]$ or $[x_r,x_{i+1}]$.  If
$d(p,[x_i,x_r])\leq\delta$, then the definition of the Gromov
product gives
\[
 (x_r,x_{i+1})_{x_i}\geq s-\delta.
\]
Consequently $s\leq\beta+2\delta$.  On the other hand,
$p\in I_i$ implies $s\geq R>\beta+2\delta$, a contradiction.
Hence $p$ lies within distance $\delta$ of
$[x_r,x_{i+1}]$, proving the claim.

We now consider the geodesic triangle with vertices
$x_r,x_{i+1},x_t$.  We claim that
\begin{equation}\label{IiCloseToLongGeodesic}
 I_i\subseteq[x_r,x_t]^{+2\delta}.
\end{equation}
Indeed, let $p\in I_i$.  By the preceding argument, there exists
$q\in[x_r,x_{i+1}]$ such that
\[
 d(p,q)\leq\delta.
\]
By $\delta$-thinness of the triangle
$(x_r,x_{i+1},x_t)$, the point $q$ lies within distance $\delta$
of either $[x_{i+1},x_t]$ or $[x_r,x_t]$.

Suppose first that $d(q,[x_{i+1},x_t])\leq\delta$.  Applying the
same Gromov-product estimate as above, now with basepoint
$x_{i+1}$, gives
\[
 (x_r,x_t)_{x_{i+1}}
 \geq d(x_{i+1},q)-\delta.
\]
On the other hand, \eqref{TwoSidedProduct}, applied with $i+1$ in
place of $i$, gives
\[
 (x_r,x_t)_{x_{i+1}}\leq\beta+2\delta.
\]
Hence
\[
 d(x_{i+1},q)\leq\beta+3\delta.
\]
Since $p\in I_i$, however,
\[
 d(x_{i+1},p)\geq R,
\]
and therefore
\[
 d(x_{i+1},q)\geq R-\delta>\beta+3\delta,
\]
a contradiction.  Thus $q$ lies within distance $\delta$ of
$[x_r,x_t]$.  Since $d(p,q)\leq\delta$, we conclude that
\[
 d(p,[x_r,x_t])\leq2\delta,
\]
which proves \eqref{IiCloseToLongGeodesic}.
We next compare consecutive retained segments.  Write
\[
 I_i=[a_i,b_i],
\]
where $a_i$ is the endpoint closer to $x_i$ and $b_i$ the endpoint
closer to $x_{i+1}$.  By \eqref{IiCloseToLongGeodesic}, we may choose
$a_i',b_i'\in[x_r,x_t]$ such that
\[
 d(a_i,a_i'),\,d(b_i,b_i')\leq2\delta,
\]
and similarly for $I_{i+1}$.

We first estimate the separation between two consecutive retained
segments before passing to $[x_r,x_t]$.  Put $v=x_{i+1}$.  By
construction,
\[
 d(b_i,v)=d(v,a_{i+1})=R.
\]
Since $b_i\in[x_i,v]$ and $a_{i+1}\in[v,x_{i+2}]$, we have
\[
\begin{aligned}
 d(x_i,x_{i+2})
 &\leq d(x_i,b_i)+d(b_i,a_{i+1})
       +d(a_{i+1},x_{i+2})  \\
 &=d_i+d_{i+1}-2R+d(b_i,a_{i+1}).
\end{aligned}
\]
On the other hand,
\[
 d(x_i,x_{i+2})
 =d_i+d_{i+1}
   -2(x_i,x_{i+2})_{x_{i+1}},
\]
and Claim~1 gives
\[
 (x_i,x_{i+2})_{x_{i+1}}\leq\beta.
\]
Consequently,
\[
 d(b_i,a_{i+1})\geq2R-2\beta.
\]
The reverse inequality
\[
 d(b_i,a_{i+1})\leq2R
\]
follows immediately by joining $b_i$ to $a_{i+1}$ through
$x_{i+1}$.  Hence
\begin{equation}\label{GapBeforeStraightening}
 2R-2\beta
 \leq d(b_i,a_{i+1})
 \leq2R.
\end{equation}

The construction above also shows that the points on $[x_r,x_t]$
which fellow travel the successive retained segments occur in the
same order: the point $b_i'$ occurs before $a_{i+1}'$ as
$[x_r,x_t]$ is traversed from $x_r$ to $x_t$. More explicitly, comparing distances from $x_r$ and using the
Gromov-product estimates above gives
\[
 d(x_r,a_{i+1}')-d(x_r,b_i')
 \geq 2R-2\beta-12\delta>0,
\]
which proves the asserted order.  Thus the corresponding
intervals on $[x_r,x_t]$ are disjoint and occur in their natural
order.  Since
\[
 d(b_i,b_i'),\,d(a_{i+1},a_{i+1}')\leq2\delta,
\]
\eqref{GapBeforeStraightening} gives
\begin{equation}\label{GapAfterStraightening}
 2R-2\beta-4\delta
 \leq d(b_i',a_{i+1}')
 \leq2R+4\delta.
\end{equation}
In particular, since
\[
 R=\beta+4L+10\eta,
\]
the lower bound in \eqref{GapAfterStraightening} is positive.
Thus consecutive retained segments remain separated after being
straightened onto $[x_r,x_t]$, while the gaps between them are
uniformly bounded by $2R+4\delta$.


We now pass from these finite geodesics to a single bi-infinite
geodesic which records the geometry of the entire broken path.  We
shall first show that the two ends of the sequence $(x_i)_{i\in
\mathbb Z}$ determine distinct points of $\partial X$, and then obtain
the intervals $J_i$ by passing the retained segments $I_i$ to a
limiting geodesic between these boundary points.

Fix the basepoint $o=x_1$.  We first consider the positive direction.
By Claim~1,
\[
 d_i\geq e_iL-4L-\eta,
\]
and $e_i\geq m^2$.  Hence, for $m$ sufficiently large,
\eqref{ChainLength} implies
\[
 d(o,x_i)\longrightarrow\infty
 \qquad\text{as }i\longrightarrow+\infty.
\]
Moreover, if $1<i<n<k$, then \eqref{TwoSidedProduct} gives
\[
 (o,x_n)_{x_i},\,(o,x_k)_{x_i}\leq\beta+2\delta.
\]
Using the definition of the Gromov product, it follows that
\[
 (x_n,x_k)_o
 \geq d(o,x_i)-2(\beta+2\delta).
\]
Letting first $n,k\to+\infty$ and then $i\to+\infty$, we obtain
\[
 (x_n,x_k)_o\longrightarrow\infty.
\]
Thus $(x_i)_{i\geq1}$ is a Gromov sequence and determines a point
$\xi_+\in\partial X$.  The same argument in the negative direction
gives a point $\xi_-\in\partial X$ such that
\[
 x_i\longrightarrow\xi_-
 \qquad\text{as }i\longrightarrow-\infty.
\]
The estimate \eqref{TwoSidedProduct}, with $r<1<t$, shows that
$\xi_-\neq\xi_+$.

By properness of $X$, after passing to a subsequence, the geodesics
$[x_r,x_t]$, with $r\to-\infty$ and $t\to+\infty$, converge uniformly
on compact sets to a bi-infinite geodesic $\ell$ joining
$\xi_-$ to $\xi_+$.  By the periodic construction of the prefixes,
\[
 P_{i+m}=uP_i,
\]
and hence
\[
 x_{i+m}=u.x_i.
\]
In particular, since $x_1=o$,
\[
 x_{nm+1}=u^n.o.
\]
It follows that
\[
 \xi_+=u^+,\qquad \xi_-=u^-,
\]
and hence $\ell$ is a geodesic joining $u^-$ to $u^+$.

We now pass the retained segments to this limiting geodesic.  For
each fixed $i$, the preceding argument shows that $I_i$ lies within
distance $2\delta$ of every sufficiently large geodesic $[x_r,x_t]$
with $r<i<i+1<t$.  The corresponding intervals on $[x_r,x_t]$ occur
in their natural order, and the gaps between consecutive ones are
uniformly bounded.  Passing to the above subsequence, these intervals
converge to intervals
\[
 J_i\subseteq\ell,\qquad i\in\mathbb Z,
\]
which are pairwise disjoint and occur in increasing order of $i$ as
$\ell$ is traversed from $u^-$ to $u^+$.

Since $|I_i|=d_i-2R$ and the endpoints of the corresponding interval
on $[x_r,x_t]$ may be chosen within $2\delta$ of the endpoints of
$I_i$, passage to the limit gives
\[
 \bigl||J_i|-(d_i-2R)\bigr|\leq4\delta.
\]
Similarly, the estimates above give
\[
 \operatorname{gap}(J_i,J_{i+1})\leq2R+4\delta.
\]

It remains to verify the geometric and quantitative properties of the
intervals $J_i$.  Put
\[
 y_i=P_i h^{e_i}.o.
\]
Then $x_i,y_i\in A_i$, while
\[
 d(y_i,x_{i+1})=d(o,g.o)\leq4L.
\]
Since $A_i$ is uniformly quasiconvex,
\[
 [x_i,y_i]\subseteq A_i^{+Q}.
\]
Moreover, a thin-triangle argument applied to the triangle with
vertices $x_i,y_i,x_{i+1}$ shows that, after removing the terminal
portion of length $R$ from $[x_i,x_{i+1}]$, the remaining segment
lies within distance $\delta$ of $[x_i,y_i]$.  In particular,
\[
 I_i\subseteq A_i^{+(Q+\delta)}.
\]
The constants in the initial choice of $\eta$ were chosen with enough
slack to absorb the additional $2\delta$ arising when $I_i$ is
straightened onto $\ell$.  Hence
\[
 J_i\subseteq A_i^{+\eta}.
\]
Since $e_i>0$, the segment $[x_i,y_i]$ runs in the positive direction
of $A_i$, from $P_i h^-$ towards $P_i h^+$.  For $m$ sufficiently
large the retained segment has length much greater than all the
bounded endpoint errors, so the orientation of $J_i$ from $u^-$ to
$u^+$ agrees with this orientation.

We next estimate lengths.  Since $I_i$ is obtained from
$[x_i,x_{i+1}]$ by removing a segment of length $R$ from each end,
\[
 |I_i|=d_i-2R.
\]
The construction of $J_i$ gives
\[
 \bigl||J_i|-(d_i-2R)\bigr|\leq4\delta.
\]
Together with Claim~1,
\[
 |d_i-e_iL|\leq4L+\eta,
\]
and the definition $R=\beta+4L+10\eta$, this yields
\[
 \bigl||J_i|-e_iL\bigr|\leq B L
\]
for some constant $B=B(X)$.  Enlarging $B$ if necessary, the gap
estimate obtained above also gives
\[
 \operatorname{gap}(J_i,J_{i+1})\leq BL.
\]
This proves all the assertions concerning the intervals $J_i$.

It remains only to obtain the lower bound for $\tau(u)$.  By
periodicity,
\[
 x_{nm+1}=u^n.o.
\]
Applying \eqref{ChainLength} with $r=1$ and $t=nm+1$ gives
\[
 d(o,u^n.o)
 \geq
 \sum_{i=1}^{nm}d_i
 -2(\beta+\delta)(nm-1).
\]
The sequence $(d_i)$ is $m$-periodic, and Claim~1 gives
\[
 d_i\geq e_iL-4L-\eta.
\]
Dividing by $n$ and letting $n\to\infty$, we obtain, after enlarging
$B=B(X)$ once more,
\[
 \tau(u)\geq
 \left(\sum_{i=1}^m e_i-Bm\right)L.
\]
This is \eqref{LengthBeforeChoice} and completes the proof of
Claim~2. \qed

\ \\ \\
We now fix $m_0\geq10$ once and for all, large enough to satisfy all
the lower bounds imposed in Claims~1 and~2, and in addition so that
\begin{equation}\label{Choice}
 m_0^2\geq100(B+K+\eta+1).
\end{equation}
For every $m\geq m_0$, since
\[
 \sum_{i=1}^m e_i
 =m^3+\frac{m(m+1)}2,
\]
\eqref{LengthBeforeChoice} gives
\[
 \tau(u)\geq
 \left(m^3+\frac{m(m+1)}2-Bm\right)L
 \geq\frac12m^3L.
\]
Thus the first assertion of the lemma is proved.

For the remainder of the proof, put
\[
 s=m^2L.
\]
This is the natural length scale of the intervals $J_i$.  By Claim~2,
\[
 (e_i-B)L\leq |J_i|\leq(e_i+B)L,
\]
and the gap between $J_i$ and $J_{i+1}$ has length at most $BL$.
Since $e_i=m^2+i$ and $1\leq i\leq m$, our choice
\eqref{Choice} gives
\begin{equation}\label{Comparison}
 \frac{s}{2}\leq |J_i|\leq2s,\qquad
 BL\leq\frac{s}{100},\qquad
 KL\leq\frac{s}{100},\qquad
 \eta\leq\frac{s}{100}.
\end{equation}
Thus each $J_i$ has length comparable to $s$, whereas the gaps between
consecutive intervals and the other geometric error terms are bounded
by a fixed small fraction of $s$.

\ \\ \\
\noindent\emph{Claim 3.}
The subgroups $C_i$, $i\in\mathbb Z$, are pairwise distinct.

\smallskip
\noindent\emph{Proof of Claim 3.}
We first show that consecutive subgroups are distinct.  Recall that
\[
 C_i=P_iCP_i^{-1},
 \qquad
 P_{i+1}=P_i h^{e_i}g.
\]
Since $h^{e_i}\in C$, the equality $C_i=C_{i+1}$ would imply
\[
 C=gCg^{-1}.
\]
Thus $g$ normalizes $C$.  Since $C$ is malnormal, and hence
self-normalizing, this would give $g\in C$, contrary to
$[g,h]\neq1$.  Hence
\begin{equation}\label{ConsecutiveDistinct}
 C_i\neq C_{i+1}
 \qquad\text{for every }i\in\mathbb Z.
\end{equation}

Suppose now, towards a contradiction, that
\[
 C_i=C_j
\]
for some $j\geq i+2$.  The subgroup $C_i$ stabilizes the pair
$\{P_i h^-,P_i h^+\}$, and similarly for $C_j$.  Thus the equality
$C_i=C_j$ implies that these pairs of endpoints coincide, and hence,
by the definition of the quasi-axes,
\[
 A_i=A_j.
\]

By Claim~2,
\[
 J_i,J_j\subseteq A_i^{+\eta}.
\]
Choose $p\in J_i$ and $q\in J_j$, and choose $p',q'\in A_i$ with
\[
 d(p,p'),\,d(q,q')\leq\eta.
\]
Since $A_i$ is $Q$-quasiconvex,
\[
 [p',q']\subseteq A_i^{+Q}.
\]
The geodesic quadrilateral with vertices $p,p',q',q$ is
$2\delta$-thin, and therefore
\[
 [p,q]\subseteq
 A_i^{+\max\{\eta+2\delta,Q+2\delta\}}
 \subseteq A_i^{+10\eta}.
\]
Choosing $p$ and $q$ at the appropriate ends of $J_i$ and $J_j$,
and using the order of the intervals along $\ell$, we obtain
\[
 J_{i+1}\subseteq[p,q]\subseteq A_i^{+10\eta}.
\]
On the other hand, Claim~2 gives
\[
 J_{i+1}\subseteq A_{i+1}^{+\eta}
 \subseteq A_{i+1}^{+10\eta}.
\]
Hence
\[
 J_{i+1}\subseteq
 A_i^{+10\eta}\cap A_{i+1}^{+10\eta},
\]
and therefore
\[
 \operatorname{diam}
 \bigl(A_i^{+10\eta}\cap A_{i+1}^{+10\eta}\bigr)
 \geq |J_{i+1}|
 \geq\frac{s}{2}>KL.
\]
Conjugating by $P_i^{-1}$, this gives
\[
 \operatorname{diam}
 \bigl(
 A(h)^{+10\eta}
 \cap h^{e_i}gA(h)^{+10\eta}
 \bigr)>K\tau(h).
\]
The Periodicity Fact, Fact \ref{Periodicity}, therefore implies
\[
 [h,h^{e_i}g]=1.
\]
Since $h^{e_i}$ commutes with $h$, this gives $[h,g]=1$, contrary
to our hypothesis.  This contradiction proves that the subgroups
$C_i$ are pairwise distinct.
\qed
\ \\ \\

\noindent\emph{Claim 4.}
If
\begin{equation}\label{LongOverlap}
 \Delta(u,fuf^{-1})>100s,
\end{equation}
then there exist $i,j\in\mathbb Z$ such that
\begin{equation}\label{ThreeMatches}
 fC_{j+r}f^{-1}=C_{i+r}
 \qquad(r=0,1,2).
\end{equation}

\smallskip
\noindent\emph{Proof of Claim 4.}
We first extract from \eqref{LongOverlap} a long segment on which
$\ell$ and $f\ell$ fellow travel.  Recall that $\ell$ joins
$u^-$ to $u^+$, while $f\ell$ joins $fu^-$ to $fu^+$.  By the definition of $\Delta$, there are two points
\[
 z_-,z_+\in
 A(u)^{+100\delta}\cap fA(u)^{+100\delta}
\]
with $d(z_-,z_+)>100s$.  Choosing nearby points on $\ell$ and on
$f\ell$, and applying thinness of geodesic quadrilaterals, we obtain
long subsegments of $\ell$ and $f\ell$ which fellow travel.  More
precisely, after allowing for uniformly bounded errors near their
endpoints, there is an interval $D$, with
\[
 |D|>99s,
\]
and isometric parametrizations
\[
 \alpha:D\longrightarrow\ell,
 \qquad
 \alpha_f:D\longrightarrow f\ell
\]
such that
\begin{equation}\label{FellowTravel}
 d(\alpha(t),\alpha_f(t))\leq\eta
 \qquad(t\in D).
\end{equation}

We use \eqref{FellowTravel} to compare the two decompositions of this
fellow-travelling corridor by the intervals $J_i$ and $fJ_j$.
Suppose that, for some $i,j$, a subinterval of $D$ corresponds on
one side to $J_i$ and on the other to $fJ_j$.  Since
\[
 J_i\subseteq A_i^{+\eta},
 \qquad
 fJ_j\subseteq fA_j^{+\eta},
\]
the corresponding points on $\ell$ lie in
\[
 A_i^{+10\eta}\cap fA_j^{+10\eta}.
\]
If
\[
 C_i\neq fC_jf^{-1},
\]
the Periodicity Fact implies that the corresponding subinterval has
length at most $KL$.  Equivalently, if the corresponding subinterval
has length greater than $KL$, then
\[
 C_i=fC_jf^{-1}.
\]
In this case we say that $J_i$ and $fJ_j$ \emph{match}.  By Claim~3,
a given interval can match at most one interval on the other side.

We next show that every complete interval $J_i$ contained in the
corridor has a match.  The portion of $f\ell$ corresponding to $J_i$
has length $|J_i|$.  It must meet at least one interval $fJ_j$.
Indeed, otherwise it would be entirely contained in a single gap
between two consecutive intervals $fJ_j$, and hence
\[
 |J_i|\leq BL\leq\frac{s}{100},
\]
contrary to
\[
 |J_i|\geq\frac{s}{2}.
\]
Suppose therefore that it meets $q\geq1$ intervals $fJ_j$.  Apart
from the first and the last, all these intervals are completely
contained in this portion.  Since every $fJ_j$ has length at least
$s/2$, while $|J_i|\leq2s$, we have
\[
 (q-2)\frac{s}{2}\leq2s,
\]
and hence $q\leq6$.  Thus the portion corresponding to $J_i$ meets
at most six intervals $fJ_j$ and at most seven intervening or
terminal gaps.

If none of these intervals matched $J_i$, then each of the at most
six portions corresponding to an interval $fJ_j$ would have length
at most $KL$, by the Periodicity Fact.  On the other hand, each of
the at most seven portions corresponding to a gap has length at most
$BL$, by Claim~2.  Consequently,
\[
 |J_i|
 \leq6KL+7BL
 \leq\frac{13s}{100},
\]
contradicting
\[
 |J_i|\geq\frac{s}{2}.
\]
Hence every complete $J_i$ contained in the corridor has a unique
match $fJ_j$.

The matching preserves orientation.  Indeed, if $J_i$ matches
$fJ_j$, then
\[
 C_i=fC_jf^{-1},
\]
so the corresponding quasi-axes have the same pair of endpoints.
The matched portions have length greater than $KL$, while
$K\geq100\eta$ and $L\geq1$.  Two oppositely oriented
parametrizations of the same quasi-axis cannot remain within the
bounded distance prescribed by \eqref{FellowTravel} over such a
segment.  Hence the orientations induced by $\ell$ and $f\ell$
agree on matched intervals.  In particular, if $J_i$ and $J_{i'}$
match $fJ_j$ and $fJ_{j'}$, respectively, and $i<i'$, then
$j<j'$.

We claim moreover that consecutive intervals have consecutive
matches.  Suppose that
\[
 J_i\longleftrightarrow fJ_j,
 \qquad
 J_{i+1}\longleftrightarrow fJ_k.
\]
By orientation preservation, $k>j$.  Suppose that $k\geq j+2$.
Then the whole interval $fJ_{j+1}$ lies between the two matched
portions.  By Claim~3,
\[
 fC_{j+1}f^{-1}\neq C_i,C_{i+1}.
\]
The Periodicity Fact therefore bounds each of the portions of
$fJ_{j+1}$ corresponding to $J_i$ and $J_{i+1}$ by $KL$.  The
portion between them corresponds to the gap between $J_i$ and
$J_{i+1}$, whose length is at most $BL$ by Claim~2.  Hence
\[
 |fJ_{j+1}|
 \leq2KL+BL
 \leq\frac{3s}{100},
\]
contradicting
\[
 |fJ_{j+1}|=|J_{j+1}|\geq\frac{s}{2}.
\]
Thus
\begin{equation}\label{ConsecutiveMatches}
 J_i\longleftrightarrow fJ_j
 \quad\Longrightarrow\quad
 J_{i+1}\longleftrightarrow fJ_{j+1},
\end{equation}
whenever both $J_i$ and $J_{i+1}$ are complete intervals contained
in the corridor.

It remains only to find three consecutive complete intervals inside
the corridor.  Suppose, to the contrary, that $D$ contains at most
two complete intervals $J_i$.  Since $D$ is itself an interval, it
can then meet at most two further $J_i$'s, one partially at each end.
Thus it meets at most four intervals $J_i$ altogether.  Each has
length at most $2s$, and the portions of $D$ outside these intervals
are contained in at most five gaps, each of length at most
$BL\leq s/100$.  It follows that
\[
 |D|
 \leq4(2s)+5\frac{s}{100}
 <9s,
\]
contrary to $|D|>99s$.  Hence $D$ contains three consecutive complete
intervals
\[
 J_i,\qquad J_{i+1},\qquad J_{i+2}.
\]

Let $fJ_j$ be the unique match of $J_i$.  Applying
\eqref{ConsecutiveMatches} twice shows that the unique matches of
$J_{i+1}$ and $J_{i+2}$ are respectively $fJ_{j+1}$ and
$fJ_{j+2}$.  Therefore
\[
 fC_{j+r}f^{-1}=C_{i+r}
 \qquad(r=0,1,2),
\]
which is \eqref{ThreeMatches} and proves Claim~4.
\qed
\ \\ \\
\noindent\emph{Claim 5.} Equations \eqref{ThreeMatches} imply
$f\in\langle u\rangle$.

\smallskip
\noindent\emph{Proof of Claim 5.}
The first equality in \eqref{ThreeMatches} gives
\[
 fP_j=P_i c
\]
for some $c\in C$, since $C$ is self-normalizing.  Using the second
equality in \eqref{ThreeMatches}, we obtain
\[
 g^{-1}h^{-e_i}c h^{e_j}g\in C.
\]
But
\[
 h^{-e_i}c h^{e_j}\in C.
\]
Since $g\notin C$ and $C$ is malnormal, it follows that
\[
 h^{-e_i}c h^{e_j}=1.
\]
Consequently,
\[
 fP_{j+1}
 =fP_jh^{e_j}g
 =P_i c h^{e_j}g
 =P_i h^{e_i}g
 =P_{i+1}.
\]

The third equality in \eqref{ThreeMatches} now gives
\[
 g^{-1}h^{e_{j+1}-e_{i+1}}g\in C.
\]
Since $h^{e_{j+1}-e_{i+1}}\in C$, another application of
malnormality gives
\[
 h^{e_{j+1}-e_{i+1}}=1.
\]
As $h$ has infinite order,
\[
 e_{j+1}=e_{i+1}.
\]
The exponents are distinct modulo the period $m$, and hence
\[
 i-j=km
\]
for some $k\in\mathbb Z$.  By the periodic construction of the
prefixes,
\[
 P_{r+m}=uP_r,
\]
and therefore
\[
 P_{i+1}=P_{j+1+km}=u^kP_{j+1}.
\]
Together with $fP_{j+1}=P_{i+1}$, this yields
\[
 fP_{j+1}=u^kP_{j+1},
\]
and hence $f=u^k$.  This proves Claim~5.
\qed
\ \\ \\
Finally, Claims~4 and~5 show that if $f\notin\langle u\rangle$, then $\Delta(u,fuf^{-1})\leq100m^2L$. This proves the second assertion and completes the proof.
\end{proof}

\begin{remark}
The second assertion also implies that $u$ has no proper roots.
Indeed, suppose that $u=v^k$
for some $|k|>1$.  Since $v$ commutes with $u$, we have $\Delta(u,vuv^{-1})=\infty$.

The second assertion therefore gives $v\in\langle u\rangle$.  Thus
$v=u^n$ for some $n\in\mathbb Z$, and so $u=v^k=u^{nk}$, contradicting that $u$ has infinite order.
\end{remark}

\begin{corollary}\label{ArbitraryN}
Let $N$ be a positive integer, and choose an integer $m$ with
\[
 m>\max\{m_0,200N\}.
\]
For every pair $g,h\in\Gamma$ satisfying the hypotheses of
Proposition~\ref{HyperbolicSmallCancellation}, the element $u$ defined
there is $N$-small cancellation.
\end{corollary}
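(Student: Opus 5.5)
The plan is to read the corollary off directly from the two assertions of Proposition~\ref{HyperbolicSmallCancellation}, since $m>m_0$ lets us apply that proposition to $u$. It then remains to check the two conditions of Definition~\ref{def:small-cancellation}, using only $L=\tau(h)\geq1$ and $m>200N$.

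For the first condition, assertion~(1) of the proposition gives
\[
\tau(u)\geq\tfrac12 m^3L\geq\tfrac12 m^3.
\]
Since $m>200N\geq200$, we get $\tfrac12 m^3>100N\cdot m^2>N$, and hence $\tau(u)>N$.

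For the second condition, let $f\notin\langle u\rangle$. Assertion~(2) of the proposition gives
\[
\Delta(u,fuf^{-1})\leq100m^2L.
\]
It suffices to show that $100m^2L<\frac1N\tau(u)$. By assertion~(1), it is enough that
\[
100m^2L<\frac{m^3L}{2N},
\]
that is, $200N<m$, which is exactly the hypothesis. Combining the two inequalities gives
\[
\Delta(u,fuf^{-1})\leq100m^2L<\frac{m^3L}{2N}\leq\frac{\tau(u)}{N},
\]
with strict inequality as the definition requires.

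There is no real obstacle here. The only points needing care are that the inequality must be strict, which comes from $m>200N$ being strict, and that $u\neq1$, which follows from $\tau(u)>0$. All the geometric difficulty is already contained in the proposition.
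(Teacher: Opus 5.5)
Your proposal is correct and follows the same argument as the paper. Both apply Proposition~\ref{HyperbolicSmallCancellation} (valid since $m>m_0$), use $L\geq1$ to get $\tau(u)\geq\frac12 m^3>N$, and use $m>200N$ to compare $\Delta(u,fuf^{-1})\leq 100m^2L$ with $\frac{1}{N}\cdot\frac12 m^3L\leq\frac1N\tau(u)$; the paper writes this last step as the ratio bound $\Delta(u,fuf^{-1})/\tau(u)\leq 200/m<1/N$.
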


\begin{proof}
By Proposition~\ref{HyperbolicSmallCancellation},
\[
 \tau(u)\geq\frac12m^3L
\]
and, for every $f\notin\langle u\rangle$,
\[
 \Delta(u,fuf^{-1})\leq100m^2L.
\]
Since $L\geq1$,
\[
 \tau(u)\geq\frac12m^3>N.
\]
Moreover,
\[
 \frac{\Delta(u,fuf^{-1})}{\tau(u)}
 \leq\frac{200}{m}
 <\frac1N.
\]
Thus $u$ is $N$-small cancellation.
\end{proof}

In order to apply Corollary~\ref{ArbitraryN} to an arbitrary
noncommuting pair, it remains to put such a pair into the geometric
position required by Proposition~\ref{HyperbolicSmallCancellation}.  The
condition on the translation length causes no difficulty: in the
fixed Cayley graph $X$, the stable translation lengths of nontrivial
elements are uniformly bounded away from zero. The point is therefore to obtain, from an arbitrary noncommuting pair
$a,b$, a pair $g,h$, given by one of finitely many pairs of words,
which satisfies the hypotheses of
Proposition~\ref{HyperbolicSmallCancellation}.  The following proposition
provides such a pair uniformly in $a$ and $b$.

\begin{proposition}\label{prop:normalization}
There exists an integer $r=r(X)\geq1$ such that, for every
noncommuting pair $a,b\in\Gamma$, one of the three pairs
\[
 (b,a^r),\qquad
 (a,b^r),\qquad
 \bigl(a,(ab)^r\bigr),
\]
say $(g,h)$, satisfies
\[
 \tau(h)\geq1,\qquad [g,h]\neq1,
\]
and
\[
 d(o,g.o)\leq4\tau(h), \textrm{ for some } o\in A(h).
\]

\end{proposition}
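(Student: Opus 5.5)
Throughout the argument we use the uniform positive lower bound $\tau_0>0$ on stable translation lengths of nontrivial elements of $\Gamma$ in $X$, already mentioned before the statement. Fix $r$ so large that $r\tau_0\geq 1$ and $r\tau_0$ exceeds a large multiple of $\eta$ and $\delta$, to be specified. Then $\tau(h)\geq1$ holds automatically for each of the three candidate $h$: they are $a^r$, $b^r$ and $(ab)^r$, and $ab\neq1$ because $a,b$ do not commute. The commutation condition also holds for all three pairs. Indeed, $C_\Gamma(a)$ is the maximal cyclic subgroup containing $a$, so $[b,a^r]=1$ would give $b\in C_\Gamma(a^r)=C_\Gamma(a)$, hence $[a,b]=1$. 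The same argument handles $[a,b^r]$. Finally, $[a,(ab)^r]=1$ would force $a\in C_\Gamma(ab)$, hence $b\in C_\Gamma(ab)$, and again $[a,b]=1$. So the only real content is the displacement inequality $d(o,g.o)\leq 4\tau(h)$ for some $o\in A(h)$.

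Write $D_v(o)=d(o,v.o)$. The plan is to compare a point of $A(a)$ with $A(b)$ and split into cases according to which of $\tau(a)$, $\tau(b)$ dominates the separation of the axes. Let $p\in A(a)$ and $q\in A(b)$ realize $d(A(a),A(b))$ up to a bounded error, and set $D=d(p,q)$.

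\emph{Case 1: $D+\tau(b)\leq 3r\tau(a)$.} Take $(g,h)=(b,a^r)$ and $o=p\in A(a)=A(a^r)$. Then $d(p,b.p)\leq 2D+d(q,b.q)\leq 2D+\tau(b)+\eta$. This is at most $4r\tau(a)=4\tau(a^r)$ provided $D\leq r\tau(a)$ up to constants.

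\emph{Case 2: the symmetric situation.} The same estimate with the roles exchanged gives $(a,b^r)$.

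\emph{Case 3: neither of the above applies.} This covers the case where $D$ is large compared with both $r\tau(a)$ and $r\tau(b)$, and also the mixed cases. To make the case split precise, I would first reduce the mixed cases: if $D\leq r\max\{\tau(a),\tau(b)\}$, say $\tau(a)\geq\tau(b)$, then Case 1 applies. Hence only the situation $D> r\max\{\tau(a),\tau(b)\}$ remains. When the axes are far apart relative to the translation lengths, the standard ping-pong estimate for hyperbolic isometries says that $ab$ is loxodromic with
\[
\tau(ab)\geq 2D-\tau(a)-\tau(b)-c'\delta
\]
up to constants. More precisely, $\tau(ab)\approx d(p,ap)+d(q,bq)+2D-O(\delta)$, since the broken path $p\to q\to bq\to$ (back to $a$'s axis) $\to abp$ has bounded backtracking once $D\gg\delta$. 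Moreover, the quasi-axis $A(ab)$ passes within a bounded distance of the midpoint region of the geodesic $[p,q]$, or at least within $O(\delta)$ of $p$, or of $b^{-1}p$ along that path.

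Taking $o\in A(ab)$ near $p$, we get $d(o,a.o)\leq\tau(a)+O(\eta)$. Since $r\tau(ab)\geq 2rD$ grows much faster, this gives $d(o,a.o)\leq 4r\tau(ab)=4\tau((ab)^r)$, using $A((ab)^r)=A(ab)$.

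The main obstacle is Case 3. It requires a uniform, quantitative statement that $A(ab)$ passes within bounded distance of $A(a)$, together with the lower bound on $\tau(ab)$, when $A(a)$ and $A(b)$ are far apart. I would prove this exactly as in Claim~2 of Proposition~\ref{HyperbolicSmallCancellation}. The bi-infinite broken path with vertices $\dots,p,\,a.p\,$ is built from the pieces $[p,a.p]$, $[a.p,aq]$, $[aq,ab.q]$, $[ab.q,ab.p]$ and their translates by powers of $ab$. Claim~1-style Gromov product bounds at each junction follow from the fact that $[p,q]$ is a near-perpendicular from $p$ to $A(b)$. The local-to-global argument via \eqref{FourPoint} and \eqref{ChainLength} then places the $ab$-orbit of $p$ within $O(\delta)$ of a geodesic from $(ab)^-$ to $(ab)^+$.

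A secondary difficulty is the intermediate regime, where $D$ is comparable to $\tau(a)$ but the junction estimates fail. I would absorb it by choosing $r$ large relative to the constants of the local-to-global lemma, so that the thresholds of the cases overlap.
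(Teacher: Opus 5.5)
\textbf{Verdict: Case~3 has a genuine gap.} Your close-axes reduction is correct, and it is a genuinely different route from the paper's. If $D\le r\max\{\tau(a),\tau(b)\}$, say with $\tau(a)\ge\tau(b)$, then
$$d(p,bp)\le 2D+\tau(b)+\eta\le(2r+1)\tau(a)+\eta\le4r\tau(a)$$
once $r\tau_0\ge\eta$, so $(b,a^r)$ works with $o=p$. Your checks of $\tau(h)\ge1$ and $[g,h]\ne1$ are also fine. The problem is Case~3, which carries all the remaining content and is only asserted.

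\textbf{Why Claim~2 does not transfer.} The local-to-global argument of Claim~2 needs every piece of the broken path to be long compared with the junction Gromov products. Its inductive step uses $d_{i-1}>2\beta+3\delta$, which the paper secures by assuming $d_i>10\beta+100\delta$. In your path $p,ap,aq,abq,abp,\dots$, the pieces $[p,ap]$ and $[aq,abq]$ have length about $\tau(a)$ and $\tau(b)$. Your case hypothesis bounds these only from \emph{above}, so they can be as small as $\tau_0$, far below $\delta$. Enlarging $r$ makes $D$ large but leaves these pieces exactly as short as they were. In that regime the bridges $[q,p]$ and $a[q,p]$ leave $A(a)$ from almost the same point and can fellow-travel for a long way. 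So the estimates $\tau(ab)=\tau(a)+\tau(b)+2D-O(\delta)$ and ``$A(ab)$ passes within $O(\delta)$ of $p$'' do not follow from $\delta$-hyperbolicity alone. For example, for a loxodromic of $\mathbb{H}^2$ with translation length $\epsilon$, a point at distance $t$ from the axis is displaced by only about $2t-2\log(1/\epsilon)$. Discreteness of the action therefore has to be used in Case~3, and your sketch does not use it there.

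\textbf{What would close the gap.} You need a uniform estimate such as $d(x,ax)\ge2d(x,A(a))-C$ for all $x\in X$ and all $a\ne1$, with $C$ depending only on $\delta$ and $\tau_0$. One way to get it when $\tau(a)$ is small: a long fellow-travelling stretch of $[p_x,x]$ and $a[p_x,x]$ produces points $w$ far from $A(a)$ with $d(w,aw)\le\tau(a)+O(\eta)$. Hence $d(w,a^kw)\le k(\tau(a)+O(\eta))$ for $k\approx M\delta/\tau(a)\le M\delta/\tau_0+1$. This contradicts the linear growth of displacement away from the common axis for $a^k$, which holds because $k\tau(a)\ge M\delta$.

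With this estimate, run Claim~2 on the $\langle ab\rangle$-orbit of the midpoint $m$ of $[p,q]$:
\begin{itemize}
\item The pieces $[m,am]$ and $a[m,bm]$ have length at least $D-C'$.
\item The junction products $(a^{-1}m,bm)_m$ and $(b^{-1}m,am)_m$ are at most $2\delta$ by the four-point condition. This is because $(p,a^{\pm1}m)_m$ and $(q,b^{\pm1}m)_m$ are nearly $D/2$, while $(p,q)_m=0$.
\item This gives $\tau(ab)\ge2D-C''$ and a point $o\in A(ab)$ within $O(\delta)$ of $m$.
\item Hence $d(o,ao)\le D+\tau(a)+O(\eta)$, far below $4r\tau(ab)$.
\end{itemize}

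\textbf{Comparison with the paper.} The paper never looks at $d(A(a),A(b))$. It takes the joint minimal displacement $E$ of $\{a,b\}$ at an almost-minimizing point $z$. The Breuillard--Fujiwara inequality $E-K\delta\le\max\{\tau(a),\tau(b),\frac12\tau(ab)\}$ then gives $v\in\{a,b,ab\}$ with $\tau(v)\ge E-K\delta$ whose axis passes within $\frac12 d(z,vz)+O(\delta)$ of $z$. This yields $d(o,go)\le3(E+1)+O(\delta)\le4\tau(v)$ once $E$ is above a threshold. Below the threshold, conjugation puts $(a,b)$ in a fixed finite ball and $r$ is chosen by finiteness. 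In the paper, discreteness enters through that finiteness step; in your route it must enter through the constant $C$.
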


\begin{proof}
Put
\[
 E=\inf_{z\in X}\max\{d(z,az),d(z,bz)\},
\]
the joint minimal displacement and choose $z\in X$ with $d(z,az),d(z,bz)\leq E+1$.
By Breuillard--Fujiwara \cite[Proposition~5.6]{BF}, there is an
absolute constant $K$ such that
\begin{equation}\label{eq:joint-displacement}
 E-K\delta\leq
 \max\{\tau(a),\tau(b),\tfrac12\tau(ab)\}.
\end{equation}

By \cite[Lemmas 2.13, 2.14 and 2.10]{DS20}, there is a constant
$D$ such that, if $\tau(v)>200\delta$, one can choose $o\in A(v)$ with
\begin{equation}\label{eq:axis}
 d(z,o)\leq\tfrac12d(z,vz)+D\delta.
\end{equation}
Indeed, 
the  minimum-displacement set lies within $90\delta$
of a geodesic with endpoints $v^-,v^+$, and its distance from $z$
is at most $\tfrac12d(z,vz)+5\delta$. Since the minimum displacement
is at least $\tau(v)$, we can apply Lemma 2.14. We divide the proof into two cases, according to whether $E$ is above or below a fixed threshold.

Take $E_0$ so that \[
 E_0-K\delta>200\delta,
 \qquad E_0\geq3+(2D+4K)\delta.
\]
First suppose $E>E_0$. Take $v\in\{a,b,ab\}$ realizing the
maximum in \eqref{eq:joint-displacement}, and take  $g$
so that
\[
 (g,v)\in\{(b,a),(a,b),(a,ab)\}.
\]
Then $\tau(v)\geq E-K\delta>200\delta$, while
$d(z,gz)\leq E+1$ and $d(z,vz)\leq2(E+1)$.
For $o$ from  \eqref{eq:axis},
\[
 d(o,go)\leq d(z,gz)+2d(z,o)
 \leq3(E+1)+2D\delta
 \leq4(E-K\delta)\leq4\tau(v).
\]
Thus this pair has the required properties, and so does
$(g,v^r)$ for every integer $r\geq1$, since
$A(v^r)=A(v)$ and $\tau(v^r)=r\tau(v)$.

Suppose now that $E\leq E_0$. Choose a vertex $x\in X$
with $d(x,z)\leq1$, and let $c\in\Gamma$ be its label. Then $d(c,ac),d(c,bc)\leq E_0+3$.
Hence, after simultaneous conjugation by $c^{-1}$, both $a$ and $b$
lie in the fixed finite ball of radius $E_0+3$ about $1$.
For each of the finitely many noncommuting pairs in this ball, choose
$o\in A(a)$. Since $\tau(a)>0$, we can choose one  $r\geq1$, large
enough for all these pairs, that satisfies
\[
 r\tau(a)\geq1,\qquad d(o,bo)\leq4r\tau(a).
\]
Conjugating back gives the required  properties for $(b,a^r)$. 
\end{proof}

We now have all the ingredients for the proof of the main theorem.

\begin{proof}[Proof of Theorem~\ref{thm:main-sc}]
Fix $N\geq1$, and let $r=r(X)$ be given by
Proposition~\ref{prop:normalization}. Choose
\[
 m>\max\{m_0,200N\}.
\]
For a noncommuting pair $a,b\in\Gamma$, Proposition~\ref{prop:normalization}
provides one of the three pairs
\[
 (b,a^r),\qquad (a,b^r),\qquad \bigl(a,(ab)^r\bigr),
\]
say $(g,h)$, satisfying the hypotheses of
Proposition~\ref{HyperbolicSmallCancellation}. Hence
\[
 h^{m^2+1}g\,h^{m^2+2}g\cdots h^{m^2+m}g
\]
is $N$-small cancellation.

Consequently, substituting the three possible choices of $(g,h)$ in the expression
above gives a finite set $W_N\subseteq\mathbb F(x,y)$, independent of
$a$ and $b$, one of whose elements is $N$-small cancellation at
$(a,b)$. 
\end{proof}

\section{Small cancellation and formal solutions}

The following theorem is a version of the implicit function theorem
\cite[Theorem 4]{MR2154989} (see also
\cite[Theorem 1.2]{MR2060243}). Originating in Merzlyakov's work on
systems of equations \cite{Merzlyakov1966}, the theorem was subsequently
extended to incorporate inequalities and became a fundamental ingredient
in the proof of the completeness of the first-order theory of nonabelian
free groups. Generalizations of Merzlyakov's theorem to
acylindrically hyperbolic groups, including
systems of equations and inequations, were
established by Andr\'e and Fruchter
\cite[Theorems~1.11 and~5.1]{AF22}.
The particular formulation needed here follows
from the same underlying techniques.

\begin{theorem}\label{ImplicitlLimitGroups}
Let $\Gamma$ be a non-elementary torsion-free hyperbolic group. Let $\Sigma(x,\bar y)=1$ and $\Psi(x,\bar y)\neq 1$ be, respectively,
a finite system of equations and a finite conjunction of inequalities,
all coefficient-free. Then there exists a positive integer $N$,
depending only on $\Sigma$, with the following property.
Suppose that $a\in\Gamma$ is $N$-small
cancellation, and that
\[
\Gamma\models
\exists\bar y\bigl(\Sigma(a,\bar y)=1\land
\Psi(a,\bar y)\neq 1\bigr).
\]
Then there exists a tuple of words $\bar w(x)$ with coefficients
in $\Gamma$ such that every word appearing in
$\Sigma(x,\bar w(x))$ is trivial in
$\Gamma*\mathbb F(x)$ and every word appearing in
$\Psi(x,\bar w(x))$ is nontrivial in
$\Gamma*\mathbb F(x)$.

Consequently,
\[
\Gamma*\mathbb F(z)\models
\exists\bar y\bigl(\Sigma(z,\bar y)=1\land
\Psi(z,\bar y)\neq 1\bigr).
\]
\end{theorem}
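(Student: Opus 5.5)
Since the coefficient-free system $\Sigma(x,\bar y)=1$ is fixed, I would work with its coordinate group and run the Merzlyakov/Sela formal-solution argument in the version of Andr\'e--Fruchter \cite[Theorems~1.11 and~5.1]{AF22}. The main task is to check that $N$-small cancellation of $a$ (Definition~\ref{def:small-cancellation}) supplies the genericity those results need. I would argue by contradiction. Suppose no $N$ works. Then there are elements $a_n$ that are $n$-small cancellation, together with solutions $\bar b_n$ of $\Sigma(a_n,\bar y)=1\wedge\Psi(a_n,\bar y)\neq1$, such that no formal solution $\bar w(x)$ works for $a_n$. Passing to a subsequence, the homomorphisms $\langle x,\bar y\mid\Sigma\rangle\to\Gamma$, $x\mapsto a_n$, $\bar y\mapsto\bar b_n$, converge, after rescaling by $\tau(a_n)$ or by the displacement of the generating tuple, to an action of a limit group $L$ on a real tree $T$. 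By the Bestvina--Paulin argument this is possible since $\tau(a_n)\to\infty$.

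The first key step is to describe the image of $x$ in this limit. The condition $\Delta(a_n,fa_nf^{-1})<\tau(a_n)/n$ for $f\notin\langle a_n\rangle$ says that translates of the quasi-axis of $a_n$ overlap only on a vanishing fraction of a period. In the limit, $x$ acts on $T$ hyperbolically, and its axis meets every translate $f\cdot\mathrm{Axis}(x)$ with $f\notin\langle x\rangle$ in a degenerate segment. I would aim to show that this forces $\langle x\rangle$ to be a free factor, or at least a vertex group whose incident edge groups are trivial, of the JSJ/Rips decomposition of the limit action. This requires excluding the possibilities that $x$ lies in an axial or IET component, or that it is elliptic. Non-ellipticity is immediate from the lower bound on $\tau$. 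Non-axial, non-IET behaviour comes from the overlap bound: in an IET or axial piece, long segments of the axis of $x$ would be covered by translates of that axis, or by axes of other elements, giving overlaps comparable to the period. The no-proper-root consequence noted after Definition~\ref{def:small-cancellation} also rules out $x$ being a proper power in the limit.

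The second step is to use this splitting to build the formal solution. If $L$ splits as $\langle x\rangle * L'$ relative to the structure, then the limit quotient map restricted to $L'$ gives homomorphisms into $\Gamma$. Composing the natural map $\langle x,\bar y\rangle\to\langle x\rangle*L'$ with $L'\to\Gamma$ yields words $\bar w(x)$ with coefficients in $\Gamma$ for which $\Sigma(x,\bar w(x))$ is trivial in $\Gamma*\mathbb F(x)$. For the inequalities I would use the standard shortening/test-sequence device. Since the $\Psi$-words are nontrivial on $\bar b_n$, one can choose the coefficient homomorphism along the sequence so that the $\Psi$-words stay nontrivial in the limit group, and hence in $\Gamma*\mathbb F(x)$. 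A Makanin--Razborov finiteness argument, using finitely many maximal shortening quotients, makes $N$ depend only on $\Sigma$. This contradicts the choice of the sequence $(a_n)$. The final consequence is then immediate: substituting $x\mapsto z$ in $\Gamma*\mathbb F(z)$ gives the witness $\bar w(z)$.

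The main obstacle is the first step: converting the coarse overlap bound into the real-tree statement that the axis of $x$ is isolated, uniformly enough to survive rescaling, and then excluding the Rips-machine components. I would try first to quote \cite[Theorem~5.1]{AF22} directly. Its hypothesis is phrased via their Definition~4.6, to which our definition is modelled. If the constants do not match, I would redo the limit argument as above with $\Gamma$ acting on its Cayley graph $X$.
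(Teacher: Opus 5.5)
The paper gives no argument of its own for this statement. It presents it as a version of the Kharlampovich--Myasnikov/Sela implicit function theorem and says it follows from the techniques of \cite[Theorems~1.11 and~5.1]{AF22}, without checking that the hypotheses there match Definition~\ref{def:small-cancellation}. So your first option, quoting \cite[Theorem~5.1]{AF22}, is the paper's route.

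The limit argument you offer as the actual proof, however, breaks at its first step. Bestvina--Paulin requires rescaling by the displacement $\mu_n$ of the whole tuple $(a_n,\bar b_n)$. Rescaling by $\tau(a_n)$ gives no action of $L$ unless $\bar b_n$ has displacement $O(\tau(a_n))$. After rescaling by $\mu_n$, the translation length of $x$ on $T$ is $\lim\tau(a_n)/\mu_n$, which can be $0$ however large $\tau(a_n)$ is. So non-ellipticity is not immediate. Concretely, take $\Sigma=[x,y]$ and $b_n=a_n^{k_n}$ with $k_n\to\infty$. Then the limit group is $\mathbb Z^2$, $x$ acts trivially on $T$, and $L$ admits no splitting $\langle x\rangle*L'$, so your second step has nothing to work with. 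Small cancellation cannot exclude this. It only constrains conjugates $fa_nf^{-1}$ with $f\in\Gamma\setminus\langle a_n\rangle$, whereas elements of $C_L(x)\setminus\langle x\rangle$ (here $y$) have images in $\langle a_n\rangle$ and are invisible to it.

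The missing ingredient is shortening relative to $x$. You must precompose the solutions with modular automorphisms of the limit group that fix $x$; in the example, $y\mapsto yx^{-k_n}$. The relative shortening argument then shows that short solutions factor through a proper quotient whenever $x$ is elliptic in their limit tree. You then iterate down the finite diagram of such shortening quotients of $\langle x,\bar y\mid\Sigma\rangle$. The small cancellation hypothesis, and your overlap arguments, only become relevant where $x$ is hyperbolic in the limit of short solutions.

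Since all these automorphisms fix $x$, the formal solution obtained specializes to the given $\bar b$ under $x\mapsto a$. Every $\Psi$-word, being nontrivial at $(a,\bar b)$, is therefore automatically nontrivial in $\Gamma*\mathbb F(x)$. This is exactly why $N$ can depend on $\Sigma$ alone. As you structured it, the contradiction argument fixes $\Psi$ and handles inequalities by test sequences, which gives at best $N(\Sigma,\Psi)$. Moreover, nontriviality of a $\Psi$-word in $L$ does not survive the non-injective map $\langle x\rangle*L'\to\langle x\rangle*\Gamma$. Your ``and hence'' step needs precisely this specialization to a single solution $h_n$, not a property of the limit.
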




The preceding results yield the following finite test for coefficient-free
existential formulas.

\begin{lemma}\label{FiniteTest}
Let $\phi(x)$ be a coefficient-free existential formula. Then there exists
a finite nonempty family $\mathcal W_\phi\subseteq\mathbb F(p,q)$ such that for every pair of noncommuting elements
$a,b\in\Gamma$,
$$\Gamma\models
\bigwedge_{w\in\mathcal W_\phi}\phi(w(a,b))
\quad\Longrightarrow\quad
\Gamma*\langle z\rangle\models\phi(z).
$$
\end{lemma}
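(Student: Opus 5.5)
The plan is to combine Theorem~\ref{thm:main-sc} with Theorem~\ref{ImplicitlLimitGroups}, after reducing $\phi$ to a disjunction of systems. First write $\phi(x)$, which is coefficient-free and existential, in disjunctive normal form:
\[
\phi(x)\equiv\bigvee_{k=1}^{n}\exists\bar y\bigl(\Sigma_k(x,\bar y)=1\land\Psi_k(x,\bar y)\neq1\bigr),
\]
where each $\Sigma_k$ is a finite system of coefficient-free equations and each $\Psi_k$ is a finite conjunction of coefficient-free inequalities. Theorem~\ref{ImplicitlLimitGroups} attaches to each $\Sigma_k$ an integer $N_k$. Put $N=\max_k N_k$; since the $N$-small cancellation condition becomes stronger as $N$ grows, an $N$-small cancellation element is $N_k$-small cancellation for every $k$. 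Then set $\mathcal W_\phi=W_N$, the finite family from Theorem~\ref{thm:main-sc}, with the variables renamed to $p,q$. It is nonempty, since it must produce an element for any noncommuting pair and such pairs exist in the non-elementary group $\Gamma$.

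Now let $a,b\in\Gamma$ be noncommuting, and assume $\Gamma\models\phi(w(a,b))$ for every $w\in\mathcal W_\phi$. Theorem~\ref{thm:main-sc} gives some $w\in W_N$ with $v=w(a,b)$ $N$-small cancellation. The formula $\phi(v)$ holds in $\Gamma$, so some disjunct $k$ holds at $v$:
\[
\Gamma\models\exists\bar y\bigl(\Sigma_k(v,\bar y)=1\land\Psi_k(v,\bar y)\neq1\bigr).
\]
The element $v$ is $N_k$-small cancellation, so the last assertion of Theorem~\ref{ImplicitlLimitGroups} yields
\[
\Gamma*\mathbb F(z)\models\exists\bar y\bigl(\Sigma_k(z,\bar y)=1\land\Psi_k(z,\bar y)\neq1\bigr).
\]
Hence $\Gamma*\langle z\rangle\models\phi(z)$.

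The only delicate point is the bookkeeping with $N$. The constant in Theorem~\ref{ImplicitlLimitGroups} depends on the system $\Sigma_k$, and a single element $w(a,b)$ must serve every disjunct, because we do not know in advance which disjunct is witnessed. Taking the maximum over the finitely many disjuncts resolves this. One could also use only the single $w$ chosen for the given pair, but requiring $\phi$ at all $w\in\mathcal W_\phi$ is exactly what makes the test independent of $(a,b)$.

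The coefficient-freeness of $\phi$ is what allows the conclusion to be read in $\Gamma*\langle z\rangle$ with $z$ free. The witnesses $\bar w(z)$ supplied by Theorem~\ref{ImplicitlLimitGroups} may carry coefficients from $\Gamma$, but $\Gamma$ is a subgroup of $\Gamma*\langle z\rangle$, so this is harmless.
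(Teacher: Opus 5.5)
Your proposal is correct and follows the paper's proof. Both decompose $\phi$ into finitely many disjuncts $\exists\bar y\,(\Sigma_k=1\land\Psi_k\neq1)$, take $N$ large enough for every disjunct (your explicit maximum, justified by monotonicity of the $N$-small cancellation condition, is what the paper's ``choose $N$ so that Theorem~\ref{ImplicitlLimitGroups} applies to every disjunct'' amounts to), set $\mathcal W_\phi=W_N$, and apply Theorem~\ref{ImplicitlLimitGroups} to the disjunct witnessed at the small cancellation element $w(a,b)$. The only bookkeeping detail you omit is that the paper adjoins the trivial equation $1=1$ so that each $\Sigma_k$ is nonempty.
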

\begin{proof}
Write $\phi(x)$ as a finite disjunction
\[
\phi(x)=
\bigvee_{j=1}^r
\exists\bar y\,
\bigl(\Sigma_j(x,\bar y)=1\land
\Psi_j(x,\bar y)\neq1\bigr),
\]
where, by adjoining the equation $1=1$ if necessary, we may assume that
each $\Sigma_j$ is nonempty. Choose $N$ so that
Theorem~\ref{ImplicitlLimitGroups} applies to every disjunct, and let
$\mathcal W_\phi:=\mathcal W_N$ be given by
Theorem~\ref{thm:main-sc}.

Suppose that $\phi(w(a,b))$ holds for every
$w\in\mathcal W_\phi$. By Theorem~\ref{thm:main-sc}, some
$w(a,b)$ is $N$-small cancellation. Applying
Theorem~\ref{ImplicitlLimitGroups} to a disjunct of $\phi(w(a,b))$
which holds, we obtain
\[
\Gamma*\langle z\rangle\models\phi(z),
\]
as required.
\end{proof}

\section{Nonexistence of model companions}

We now apply the uniform small cancellation theorem to the model companion problem for torsion-free hyperbolic groups.

\begin{proof}[Proof of Theorem~\ref{thm:model-companion}]
Suppose that $\mathcal U$ is a model companion of $Th(\Gamma)$. By Fact \ref{ModelComplete}, there is an existential formula $\phi(x)=\exists\bar y\,\psi(x,\bar y)$, with $\psi$ quantifier-free, such that
\begin{equation}\label{Nonsquares}
 \mathcal U\models\forall x\,
 \bigl(\phi(x)\leftrightarrow\neg\exists y\,(y^2=x)\bigr).
\end{equation}
Let
\[
 \theta(x):=\bigl(\exists y\,(y^2=x)\bigr)\vee\phi(x).
\]
This is an existential formula, and $\mathcal U\models\forall x\,\theta(x)$. Let $\mathcal W_\theta$ be the finite family supplied by Lemma \ref{FiniteTest}.

The theory $\mathcal U$ has a nonabelian model $G$. Otherwise the universal sentence $\forall p\forall q\,[p,q]=1$ would belong to $\mathcal U^{\forall}=Th(\Gamma)^{\forall}$, a contradiction. Choose noncommuting $a,b\in G$. Then $G$ satisfies the sentence
\begin{equation}\label{FiniteSentence}
 \exists p\exists q\left([p,q]\neq1\ \wedge\
  \bigwedge_{w\in\mathcal W_\theta}\theta(w(p,q))\right).
\end{equation}
Since $\mathcal W_\theta$ is finite, this sentence is existential. By the completeness of $Th(\Gamma)$ (see the observation at the end of Section~2) we get that \eqref{FiniteSentence} holds in $\Gamma$. Applying Lemma \ref{FiniteTest} to its witnesses gives
\begin{equation}\label{FreeGenerator}
 \Gamma*\mathbb{F}(z)\models\theta(z).
\end{equation}

The element $z$ has no square root in $\Gamma*\mathbb{F}(z)$. Thus 
$\Gamma*\mathbb{F}(z)\models\phi(z)$. By the observation in the last paragraph of Subsection~\ref{sub:model-theory} we get
$
\Gamma*\mathbb{F}(z)\models\phi(z^2).
$
On the other hand, \eqref{Nonsquares} implies the universal sentence
\[
\forall x\forall\bar y\forall t\,
\neg\bigl(\psi(x,\bar y)\land t^2=x\bigr),
\]
which belongs to $U^\forall=Th(\Gamma)^\forall$ and hence holds in
$\Gamma*\mathbb{F}(z)$. This is a contradiction, since $z$ is
a square root of $z^2$.
\end{proof}

Finally, by \cite{KM12}, every nonabelian $\Gamma$-limit group is
universally equivalent to $\Gamma$. Hence the main theorem also yields:

\begin{corollary}
For any non-elementary torsion-free hyperbolic group $\Gamma$, the first-order theory of any nonabelian $\Gamma$-limit group does not admit a model companion.
\end{corollary}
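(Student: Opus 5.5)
The corollary should follow from Theorem~\ref{thm:model-companion} together with the fact that the existence of a model companion depends only on the universal theory. Indeed, by definition, a theory $\mathcal T$ admits a model companion exactly when some model complete theory $\mathcal U$ satisfies $\mathcal U^\forall=\mathcal T^\forall$. So if two theories have the same universal consequences, one admits a model companion if and only if the other does.

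The plan is as follows. Let $L$ be a nonabelian $\Gamma$-limit group. By \cite{KM12}, $L$ is universally equivalent to $\Gamma$, meaning that $L$ and $\Gamma$ satisfy the same universal sentences. Hence $\operatorname{Th}(L)^\forall=\operatorname{Th}(\Gamma)^\forall$, since both are the set of universal sentences true in the respective group. Now suppose $\operatorname{Th}(L)$ had a model companion $\mathcal U$. Then $\mathcal U$ is model complete and
\[
\mathcal U^\forall=\operatorname{Th}(L)^\forall=\operatorname{Th}(\Gamma)^\forall,
\]
so $\mathcal U$ would also be a model companion of $\operatorname{Th}(\Gamma)$. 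This contradicts Theorem~\ref{thm:model-companion}.

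The only step that needs care is the use of \cite{KM12}. One has to confirm that the nonabelian hypothesis is exactly what that result requires, and that "universally equivalent" is meant for coefficient-free sentences, which matches the language used to define $\mathcal T^\forall$. There is no real obstacle beyond this bookkeeping. In particular, the completeness of $\operatorname{Th}(\Gamma)$, used in Section~2, is never invoked for $L$: the argument applies Theorem~\ref{thm:model-companion} as a black box and needs no new facts about $L$.
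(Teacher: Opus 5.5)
Your argument is the paper's argument: \cite{KM12} gives $\operatorname{Th}(L)^\forall=\operatorname{Th}(\Gamma)^\forall$, so a model companion of $\operatorname{Th}(L)$ would also be a model companion of $\operatorname{Th}(\Gamma)$, contradicting Theorem~\ref{thm:model-companion}. On the point you flag, \cite{KM12} treats $\Gamma$-limit groups as finitely generated groups containing $\Gamma$ and discriminated by it, where universal equivalence holds even with constants from $\Gamma$ (so also coefficient-free, and ``nonabelian'' is automatic), whereas for coefficient-free limit groups over $\Gamma$ it can fail (e.g.\ $F_2\le\langle a,b,c\mid a^2b^2c^2\rangle$, using Lyndon's theorem on $x^2y^2z^2=1$), so the argument requires the former reading.
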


\ \\
{\bf Olga Kharlampovich}\\
Department of Mathematics and Statistics\\
Hunter College, City University of New York (CUNY)\\
695 Park Avenue\\
New York, NY 10065\\
U.S.A.
\ \\ \\
{\bf Rizos Sklinos} (corresponding author: rizos.sklinos@amss.ac.cn)\\
Academy of Mathematics \& Systems Science\\
Chinese Academy of Sciences\\
Room 1016 SiYuan Building\\
No.55 of Zhongguancun East Road\\
Haidian District,  100190\\
P. R. China

\end{document}